\newcommand{\sodd}{\sig_{\text{odd}}}
\newcommand{\mg}{\infty}
\newcommand{\pii}{\pi i}
\newcommand{\R}{\mathcal{R}}
\newcommand{\dsum}{\di\sum}
\DeclareFontFamily{U}{mathx}{\hyphenchar\font45}
\DeclareFontShape{U}{mathx}{m}{n}{
      <5> <6> <7> <8> <9> <10>
      <10.95> <12> <14.4> <17.28> <20.74> <24.88>
      mathx10
      }{}
\DeclareSymbolFont{mathx}{U}{mathx}{m}{n}
\DeclareMathAccent{\widecheck}{0}{mathx}{"71}
\newcommand{\inrr}{\ensuremath{\in\rr}}
\renewcommand{\kill}[1]{}
\newcommand{\dummy}[1]{\mbox{}}
\newcommand{\xequal}[2][]{\ext@arrow 0055{\equalfill@}{#1}{#2}}
\def\equalfill@{\arrowfill@\Relbar\Relbar\Relbar}
\newcommand{\mto}{\mapsto}
\newcommand{\1}{\ensuremath{\ol{\mathrm{P}}}}
\renewcommand{\k}{\ensuremath{\ol{\mathrm{P}}}}
\renewcommand{\k}[1]{\ensuremath{\left({#1}\right)}}
\newcommand{\bca}{\begin{cases}}
\newcommand{\eca}{\end{cases}}
\newcommand{\mug}{\ensuremath{\infty}}
\newcommand{\dprod}{\di\prod}
\newcommand{\ff}[2]{\ensuremath{\di\fr{#1}{#2}}}
\newcommand{\s}[1]{\ensuremath{\di\int{#1}\,dx}}
\newcommand{\bpic}{\begin{picture}}\newcommand{\epic}{\end{picture}}
\newcommand{\beda}{\begin{edaenumerate}}
\newcommand{\eeda}{\end{edaenumerate}}
\newcommand{\cd}{\cdots}
\newcommand{\sh}[1]{\shadowbox{#1}}
\newcommand{\st}{\strut}
\newcommand{\q}{\quad}
\newcommand{\bq}{\begin{quote}}\newcommand{\eq}{\end{quote}}
\newcommand{\rt}{\sqrt}
\newcommand{\be}{\begin{enumerate}}\newcommand{\ee}{\end{enumerate}}
\newcommand{\bce}{\begin{center}}\newcommand{\ece}{\end{center}}
\newcommand{\bde}{\begin{description}}\newcommand{\ede}{\end{description}}
\newcommand{\bri}{\begin{flushright}}\newcommand{\eri}{\end{flushright}}
\newcommand{\bb}{\begin{block}}\newcommand{\eb}{\end{block}}
\newcommand{\bt}{\begin{thm}}\newcommand{\et}{\end{thm}}
\newcommand{\bpf}{\begin{proof}}\newcommand{\epf}{\end{proof}}
\newcommand{\bex}{\begin{ex}}\newcommand{\eex}{\end{ex}}
\newcommand{\bexr}{\begin{exr}}\newcommand{\eexr}{\end{exr}}
\newcommand{\bft}{\begin{fact}}\newcommand{\eft}{\end{fact}}
\newcommand{\brk}{\begin{rmk}}\newcommand{\erk}{\end{rmk}}
\newcommand{\ba}{\begin{align*}}\newcommand{\ea}{\end{align*}}
\newcommand{\bexe}{\begin{exe}}\newcommand{\eexe}{\end{exe}}
\newcommand{\tn}{\textnormal}
\newcommand{\bit}{\begin{itemize}}\newcommand{\eit}{\end{itemize}}
\newcommand{\bcm}{\begin{comment}}\newcommand{\ecm}{\end{comment}}
\newcommand{\ol}{\overline}\newcommand{\ul}{\underline}
\newcommand{\hf}{\hfill}
\newcommand{\fr}{\frac}
\newcommand{\cc}{\ensuremath{\mathbf{C}}}
\newcommand{\nn}{\ensuremath{\mathbf{N}}}
\newcommand{\rr}{\ensuremath{\mathbf{R}}}
\newcommand{\zz}{\ensuremath{\mathbf{Z}}}
\newcommand{\bd}{\begin{defn}}\newcommand{\ed}{\end{defn}}
\newcommand{\bp}{\begin{prop}}\newcommand{\ep}{\end{prop}}
\newcommand{\p}{\ensuremath{\pi}}
\newcommand{\eh}{\emph}
\newcommand{\sub}{\subseteq}
\newcommand{\mb}{\mbox}
\newcommand{\ph}{\phantom}
\newcommand{\di}{\displaystyle}
\renewcommand{\d}{\ensuremath{\bm{d}}}
\newcommand{\f}{\frac}
\newcommand{\y}{\ensuremath{\bm{y}}}
\newcommand{\z}{\ensuremath{\bm{z}}}
\newcommand{\np}{\newpage}
\renewcommand{\y}{\mathbf{y}}
\renewcommand{\s}{\sigma}
\renewcommand{\d}{\delta}
\renewcommand{\P}{\mathcal{P}}
\newcommand{\Q}{\mathcal{Q}}
\renewcommand{\d}{\delta}
\theoremstyle{definition}
\newtheorem{thm}{Theorem}[section]
\newtheorem{lem}[thm]{Lemma}
\newtheorem{prop}[thm]{Proposition}\newtheorem{cor}[thm]{Corollary}
\newtheorem{exr}[thm]{Exercise}
\newtheorem{ex}[thm]{Example}
\newtheorem{defn}[thm]{Definition}\newtheorem{rmk}[thm]{Remark}
\newtheorem{fact}[thm]{Fact}
\newtheorem{block}[thm]{}
\newtheorem*{exe}{Exercise}
\renewcommand{\P}{\mathbf{P}}
\renewcommand{\z}{\zeta}
\newcommand{\tf}{\tfrac}
\newcommand{\tff}{\tfrac}
\newcommand{\ps}{\psi}
\begin{document}%:\UTF{2022}%

\title{Applications of Ramanujan's work on Eisenstein series}
%signed bigrassmannian polynomials}
\author{Masato Kobayashi}
%\author{Shunji Sasaki}
%\thanks{corresponding author:Masato Kobayashi, masato210@gmail.com}
\date{\today}                                       % Activate to display a given date
% or no date
%\thanks{corresponding author}
%\dedicatory{Dedicated to }
%\subjclass[2020]{Primary:11M32}

\subjclass[2020]{Primary:33C75;\,Secondary:11M36, 15F11}
\keywords{divisor sum functions, 
Eisenstein series, elliptic functions, 
Lambert series, $q$-series, theta functions.}
%Jacobi triple product identity, pentagonal numbers, triangular numbers. 
%}
\address{Masato Kobayashi\\
Department of Engineering\\
Kanagawa University, 3-27-1 Rokkaku-bashi, Yokohama 221-8686, Japan.}
\email{masato210@gmail.com}
%\address{Shunji Sasaki\\
%Kawaguchi public Kamiaoki junior high school\\
%3-9-1 Kamiaoki-Nishi, Kawaguchi 333-0845, Japan.
%}

%\address{Graduate School of Science and Engineering\\
%Department of Mathematics\\
%Saitama University,
%255 Shimo-Okubo, Saitama 338-8570, Japan.}
%\email{schnittkejp@me.com}
%\curraddr{}
%\address{Department of Mathematics\\
%the University of Tennessee, Knoxville, TN 37996}

\maketitle
\begin{abstract}
Ramanujan (1916) expressed quotients of certain $q$-series as polynomials of the Eisenstein series $P, Q, R$ and derived the famous Ramanujan's differential equations. We continue this research with the variants of Eisenstein-type series which Hahn (2007) recently introduced. We also prove new formulas of convolution sums for divisor sum functions as subsequent work of Cheng-Williams (2004) and 
Huard-Ou-Spearman-Williams (2002).
\end{abstract}
\tableofcontents

\section{Introduction}%:\UTF{2022}%
\subsection{Eisenstein series}%:\UTF{2022}%
\eh{Eisenstein series} are one of the important topics in the number theory; 
in particular, they play a significant role in the theory of modular forms, Ramanujan's theta functions and elliptic functions. 
Let us make a definition of these series after preparing several words and symbols. For $n\in\nn$ and $s\inrr$, 
the \eh{divisor sum function} of weight $s$ is  
$\sigma_{s}(n)=\sum_{d|n}d^{s}$. 
Often, we write $\s_{1}(n)=\s(n)$. 
In the sequel, we always assume $q, x\in\cc$ with  $|q|<1$ and  $|x|<1$. Define signed Bernoulli numbers $(B_{n})_{n\ge0}$ by 
\[
\ff{x}{e^{x}-1}=
\dsum_{n=0}^{\mug}\ff{B_{n}}{n!}x^{n}.
\]
The first few values of nonzero $B_{n}$ are 
\[
B_{0}=1, B_{1}=-\ff{1}{2}, 
B_{2}=\ff{1}{6}, B_{4}=-\ff{1}{30}, 
B_{6}=\ff{1}{42}, B_{8}=-\ff{1}{30}, B_{10}=\ff{5}{66}.	
\]
%Notice that there is a relation to Lambert series.
%\[
%\dsum_{n=1}^{\mug}\ff{n^{s}q^{n}}{1-q^{n}}=
%\dsum_{n=1}^{\mug}\s_{s}(n)q^{n}.
%\]
\begin{defn}[Normalized Eisenstein series]
\[
E_{2k}(q)=
1-\ff{4k}{B_{2k}}
\dsum_{n=1}^{\mug}
\s_{2k-1}(n)q^{n},\q k\ge1.
\]
\end{defn}
We remark that it is more traditional to regard Eisenstein series as functions of $\tau\in \cc$, $\text{Im}(\tau)>0$ with $q=e^{2\pii\tau}$. 
Customarily, there is particular usage for the first three Eisenstein series as
\begin{align*}
	P(q)&=1-24
	\dsum_{n=1}^{\mug}\s(n)q^{n},
	\\Q(q)&=E_{4}(q)=1+240
	\dsum_{n=1}^{\mug}\s_{3}(n)q^{n},
	\\R(q)&=E_{6}(q)=1-504
	\dsum_{n=1}^{\mug}\s_{5}(n)q^{n}.
\end{align*}
%with understanding 
%$\s(0)=-\tf{1}{24}$,
%$\s_{3}(0)=\tf{1}{240}$,
%$\s_{5}(0)=-\tf{1}{504}$ so that 
%constant terms are always 1. 
In the literature, some authors also use symbols $L(q), M(q), N(q)$ for $P(q), Q(q), R(q)$, respectively. 
Whenever there is no confusion from the context, we abbreviate $P(q), Q(q), R(q)$ to $P, Q, R$.

\subsection{Main results}%:\UTF{2022}%

Our main results consist of three kinds of theorems in  Sections \ref{s2}, \ref{s3}, \ref{s4}, respectively. 
Let us describe some background and our motivation.

One of the classic topics in modular forms and 
$q$-series is to find an expansion of 
$(q;q)_{\mug}^{r} (r>0)$ where 
\[
(q;q)_{\mug}=\dprod_{n=1}^{\mug}(1-q^{n}). 
\]
There are beautiful results by Euler ($r=1$), Gauss-Jacobi $(r=3)$, Klein-Fricke ($r=8$), Dyson, Atkin,  Macdonald and many others.
More recently, Berndt-Chan-Liu-Yesilyurt (2004) \cite{bcly} found 
\begin{align*}
	32(q;q)_{\mg}^{10}&=9
\dsum_{m=-\mug}^{\mug}
(-1)^{m}(2m+1)^{3}q^{3m(m+1)/2}
\dsum_{n=-\mug}^{\mug}(-1)^{n}(2n+1)q^{n(n+1)/6}
	\\&\ph{=}-
\dsum_{m=-\mug}^{\mug}(-1)^{m}(2m+1)q^{3m(m+1)/2}
\dsum_{n=-\mug}^{\mug}(-1)^{n}(2n+1)^{3}
q^{n(n+1)/6}.
\end{align*}
Similarly, 
Chan-Cooper-Toh (2006) \cite{cct} proved 
\begin{align*}
	16308864q^{26/24}(q;q)_{\mg}^{26}&=\sum_{i, j=-\mg}^{\mg}(-1)^{i+j}
f\k{\ff{(6i+1)^{2}}{2}, \ff{(6j+1)^{2}}{2}}
q^{\tf{(6i+1)^{2}+(6j+1)^{2}}{24}}
	\\&\ph{=}+
\sum_{i, j=-\mg}^{\mg}(-1)^{i+j}
f\k{12i^{2}, (6j+1)^{2}}
q^{\tf{12i^{2}+(6j+1)^{2}}{12}}
\end{align*}
where 
\[
f(m, n)=
m^{6}-66m^{5}n+495m^{4}n^{2}-924m^{3}n^{3}+495m^{2}n^{4}-66mn^{5}+n^{6}.
\]
%Along these lines, Ramanujan's work \cite{ra}; 
In their discussions, Ramanujan's work (1916) \cite{ra} on Eisenstein series plays a fundamental role; we will give  more details in subsection \ref{s21}. 
Motivated by their results, we establish our main results in Section \ref{s2} as Theorems \ref{t3}, \ref{t4}, \ref{tr} and Corollary \ref{t5}.

%Hahn (2007) \cite{ha} found  further studied them and found many relations. Here we prove 
One typical application of Eisenstein series is 
to derive arithmetic identities on divisor sum functions. 
Historically, for example, Glaisher, Lahiri, MacMahon, Ramanujan discovered such formulas. Among those,  one simple formula comes from the relation $E_{8}(q)=E_{4}^{2}(q)$ as follows.
\begin{thm}\label{t9}
For each $n\ge0$, we have 
%\[
%\s_{3}(n)=
%\ff{1}{5}
%\k{(6n-1)\s(n)+
%12 \dsum_{k=1}^{n-1}\s(k)\s(n-k)}
%\]
\[
\s_{7}(n)=120
\dsum_{
\substack{i, j\ge0\\
i+j=n}
}\s_{3}(i)\s_{3}(j)
\]
with 
$\s_{3}(0)=\tf{1}{240}$ and $\s_{7}(0)=\tf{1}{480}.$ 
\end{thm}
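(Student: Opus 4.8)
The plan is to read off the identity by extracting the coefficient of $q^{n}$ from both sides of the relation $E_{8}(q)=E_{4}^{2}(q)$. The first move I would make is to use the two boundary conventions $\s_{3}(0)=\tf{1}{240}$ and $\s_{7}(0)=\tf{1}{480}$ to fold the constant terms into the summations, so that each Eisenstein series becomes a single homogeneous power series. Since $E_{4}(q)=Q(q)=1+240\sum_{n\ge1}\s_{3}(n)q^{n}$ and $240\cdot\tf{1}{240}=1$, the convention for $\s_{3}(0)$ lets me write $E_{4}(q)=240\sum_{n\ge0}\s_{3}(n)q^{n}$. In the same way, the defining formula for $E_{8}$ together with $B_{8}=-\tf{1}{30}$ gives $E_{8}(q)=1+480\sum_{n\ge1}\s_{7}(n)q^{n}$, and since $480\cdot\tf{1}{480}=1$ the convention for $\s_{7}(0)$ rewrites this as $E_{8}(q)=480\sum_{n\ge0}\s_{7}(n)q^{n}$.

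With both sides in this form, the next step is a Cauchy product. Squaring the series for $E_{4}$ yields
\[
E_{4}(q)^{2}=240^{2}\sum_{n\ge0}\Big(\sum_{\substack{i,j\ge0\\i+j=n}}\s_{3}(i)\s_{3}(j)\Big)q^{n}.
\]
Equating the coefficient of $q^{n}$ on the two sides of $E_{8}(q)=E_{4}^{2}(q)$ gives $480\,\s_{7}(n)=240^{2}\sum_{i+j=n}\s_{3}(i)\s_{3}(j)$, and dividing by $480$, using $240^{2}/480=120$, produces the stated formula uniformly for every $n\ge0$ once the boundary values are substituted.

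The only genuinely non-trivial ingredient --- and hence the main obstacle --- is the relation $E_{8}(q)=E_{4}^{2}(q)$ itself; everything after it is bookkeeping. The clean justification is the dimension count for modular forms: the space of holomorphic modular forms of weight $8$ for $\te{SL}_{2}(\zz)$ is one-dimensional, and both $E_{8}$ and $E_{4}^{2}$ belong to it with constant term $1$, so they must coincide. Alternatively, staying entirely inside the $q$-series framework of this paper, one may take $E_{8}=E_{4}^{2}$ as the known input, exactly as the surrounding text presents it, in which case the theorem is a one-line consequence of the coefficient comparison above.
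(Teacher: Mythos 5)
Your proof is correct and follows exactly the route the paper indicates: the identity is read off from $E_{8}(q)=E_{4}^{2}(q)$ by folding the constant terms into the sums via the conventions $\s_{3}(0)=\tf{1}{240}$, $\s_{7}(0)=\tf{1}{480}$ and comparing coefficients, with $240^{2}/480=120$. Your justification of $E_{8}=E_{4}^{2}$ by the one-dimensionality of the space of weight-$8$ modular forms is a standard and adequate way to supply the one input the paper takes as known.
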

More recent developments are due to Cheng-Williams (2004) \cite{cw}, Hahn (2007) \cite{ha}, 
 Huard-Ou-Spearman-Williams (2002) \cite{hu} and Melfi (1998) \cite{me} etc. For example, Cheng-Williams proved 
\[
\sum_{m\le n}\s(4m-3)\s(4n-(4m-3))
=4\s_{3}(n)-4\s_{3}(\tf{n}{2}).
\] 
The idea of both Cheng-Williams and Hahn is to use certain parametrization of Eisenstein series originating  from the theory of elliptic functions. Melfi makes use of modular forms while the method of Huard-Ou-Spearman-Williams is quite elementary. Inspired by all of  these work, in Section \ref{s3}, we show several new convolution sums on divisors as Theorems \ref{t10}, \ref{t11}, Corollary \ref{cor1} and 
Theorem \ref{t12}.

In Section \ref{s4}, we prove some miscellaneous results. Recently, Berndt-Reb\'{a}k (2021) \cite{br}, 
Paek-Shin-Yi (2018) \cite{psy} and Yi-Cho-Kim-Lee-Yu-Paek (2013) \cite{yck} and other researchers discussed special values of the Ramanujan theta functions $\varphi, \psi$ and cubic continued fractions $G$. 
Along the same line, we find special values of certain  series as Theorems \ref{t6}, \ref{t7}, \ref{t8}.

With these new results, we wish to contribute  development of our research in this area.

%\begin{thm}
%is a polynomial of 
%\end{thm}

%\np%----------------------------------------------------
\subsection{Notation}%:\UTF{2022}%
%Throughout this article, $i, j, k, n$ denote a nonnegative integer unless otherwise specified. 
Following Berndt's book \cite{be}, we collect some notation and symbols which frequently appear in the Ramanujan theory.

\begin{align*}
	(x;q)_{\mg}&=\dprod_{n=0}^{\mug} (1-xq^{n}).
	\\(q;q)_{\mug}&=\dprod_{n=1}^{\mug} (1-q^{n})
=
\dsum_{k=-\mg}^{\mug}(-1)^{k}
q^{k(3k-1)/{2}}.
	\\f(a, b)&=
	\dsum_{k=-\mug}^{\mug}a^{k(k+1)/2}b^{(k-1)k/2},
	\q |ab|<1.
	\\\varphi(q)&=f(q, q)=1+2 \dsum_{k=1}^{\mug}q^{k^{2}}.
	\\\psi(q)&=f(q, q^{3})
=
\dsum_{k=0}^{\mug}q^{k(k+1)/2}.
	\\f(-q)&=f(-q, -q^{2})\, (=(q;q)_{\mg}).
	\\y&=\p \ff{\mb{}_{2}F_{1}(\tf{1}{2}, \tf{1}{2};1;1-x)}
{\mb{}_{2}F_{1}(\tf{1}{2}, \tf{1}{2};1;x)}. 
	\\z&=\mb{}_{2}F_{1}(\tf{1}{2}, \tf{1}{2};1;x).
	\\q&=e^{-y}.
\end{align*}
Here $\mb{}_{2}F_{1}$ is the Gaussian hypergeometric  series.
\begin{fact}[{\cite[p.129, 127]{be}, \cite[p.356]{ab}}]
\label{f0}
Let $x, z, q$ be given as above. Then 
\begin{align*}
	P&=(1-5x)z^{2}+12x(1-x)z\ff{dz}{dx},
	\\Q&=z^{4}(1+14x+x^{2}),
	\\R&=z^{6}(1+x)(1-34x+x^{2}),
	\\qP'&=\ff{P^{2}-Q}{12},
	\\qQ'&=\ff{PQ-R}{3},
	\\qR'&=\ff{PR-Q^{2}}{2}
\end{align*}
where $\mb{}'$ denotes the derivative in $q$.
\end{fact}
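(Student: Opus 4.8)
The statement collects six identities: the three parametrizations of $P,Q,R$ in terms of $x,z$, together with Ramanujan's differential equations $qP'=\ff{P^2-Q}{12}$, $qQ'=\ff{PQ-R}{3}$, $qR'=\ff{PR-Q^2}{2}$. I would prove them in a deliberately non-circular order, isolating the single genuinely analytic ingredient and then reducing everything else to theta-function reduction, dimension counting, and one short algebraic cancellation. The logical skeleton is: (1) a transcendental relation converting $q$-differentiation into $x$-differentiation; (2) the parametrizations of $Q$ and $R$; (3) the three differential equations, argued independently of the parametrizations; and (4) the parametrization of $P$, which then drops out.

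The plan begins by establishing the relation $q\,\ff{dx}{dq}=x(1-x)z^2$, equivalently the operator identity $q\ff{d}{dq}=x(1-x)z^2\ff{d}{dx}$. Here $z={}_2F_1(\tf12,\tf12;1;x)$ and its companion $z_1={}_2F_1(\tf12,\tf12;1;1-x)$ are the two solutions of the hypergeometric equation $x(1-x)z''+(1-2x)z'-\tf14 z=0$, whose Wronskian $W=z_1'z-z_1z'$ obeys $W'=-\ff{1-2x}{x(1-x)}W$ and hence $W=\ff{C}{x(1-x)}$. Since $\log q=-\p\,z_1/z$ by definition of $y$ and $q$, differentiating gives $\ff{d\log q}{dx}=-\p\,\ff{W}{z^2}=-\ff{\p C}{x(1-x)z^2}$, and the constant $C=-1/\p$ is pinned down by the logarithmic behaviour of $z_1$ as $x\to 0$ (where $z\to 1$). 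Inverting yields the claimed relation. I expect this step to carry the real analytic weight, chiefly because of normalization: one must verify carefully that $q=e^{-y}$ is exactly the nome for which $z=\varphi^2(q)$, so that the \emph{same} $q$ occurs in $P,Q,R$.

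Next I would obtain the parametrizations of $Q$ and $R$ from the classical representations of $E_4$ and $E_6$ as isobaric polynomials in the Jacobi theta-constants in the nome $q$, namely $Q=\theta_2^8+14\theta_2^4\theta_3^4+\theta_3^8$ and the analogous sextic expression for $R$. Substituting $z=\theta_3^2$ and $x=\theta_2^4/\theta_3^4$ (so $\theta_2^4=xz^2$ and $\theta_3^4=z^2$) and reducing with Jacobi's identity $\theta_2^4+\theta_4^4=\theta_3^4$ turns these into $Q=z^4(1+14x+x^2)$ and $R=z^6(1+x)(1-34x+x^2)$; both can be checked against the first few Fourier coefficients. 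For the three differential equations I would argue independently of the parametrizations via the structure of (quasi)modular forms for $SL_2(\zz)$: the Serre derivative $\partial_k f=qf'-\ff{k}{12}Pf$ raises the weight by $2$. Applying it to $Q\in M_4$ lands in $M_6=\cc\,R$ and, after matching the constant term, gives $qQ'-\tf13 PQ=-\tf13 R$; applying it to $R\in M_6$ lands in $M_8=\cc\,Q^2$ and gives $qR'-\tf12 PR=-\tf12 Q^2$, using the relation $E_8=E_4^2$; and the weight-$4$ space of quasimodular forms, spanned by $Q$ and $P^2$, forces $qP'=\ff{P^2-Q}{12}$ once the two undetermined constants are fixed by comparing the coefficients of $q^0$ and $q^1$. (Ramanujan's original Lambert-series manipulation is an alternative for a fully elementary derivation.)

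Finally, the parametrization of $P$ follows with no further analysis. Solving the $Q$-equation gives $P=\ff{R+3qQ'}{Q}$; substituting $q\ff{d}{dq}=x(1-x)z^2\ff{d}{dx}$ and the parametrizations of $Q,R$, one finds $3qQ'+R=12x(1-x)(1+14x+x^2)z^5z'+(1+9x-69x^2-5x^3)z^6$, and dividing by $Q=z^4(1+14x+x^2)$ collapses, using the polynomial identity $(1-5x)(1+14x+x^2)=1+9x-69x^2-5x^3$, to exactly $P=(1-5x)z^2+12x(1-x)z\ff{dz}{dx}$; notably no use of the hypergeometric equation is needed at this stage. The only real obstacle in the whole argument is Step 1 — determining the constant $C$ and keeping the nome normalization straight; once that relation is secured, Steps 2--4 are routine.
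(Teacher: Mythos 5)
You should first be aware that the paper contains no proof of this statement: Fact \ref{f0} is imported as a citation from Berndt \cite[p.127, 129]{be} and Andrews--Berndt \cite[p.356]{ab}, so the only meaningful comparison is with the arguments in those sources. Measured against them, your sketch is essentially correct but takes a genuinely different route. The classical treatment proves the three differential equations by Ramanujan's elementary manipulation of Lambert series (this is the method of \cite{ra}, reproduced by Berndt), and derives the parametrizations of $P$, $Q$, $R$ from the theory of elliptic integrals and theta functions via the inversion theorem $z=\varphi^{2}(q)$, $x=1-\varphi^{4}(-q)/\varphi^{4}(q)$. You instead (1) extract the nome-derivative relation $q\,\ff{dx}{dq}=x(1-x)z^{2}$ from the hypergeometric Wronskian (your normalization $C=-1/\p$ from the logarithmic singularity of $z_{1}$ at $x=0$ is correct), (2) quote the theta-constant representations of $E_{4}$ and $E_{6}$ and reduce them with Jacobi's identity, (3) obtain the ODEs from the Serre derivative together with the one-dimensionality of $M_{6}$ and $M_{8}$ and the two-dimensionality of weight-$4$ quasimodular forms, and (4) recover the $P$-parametrization purely algebraically from $P=(R+3qQ')/Q$; your cancellation there is right, since $(1-5x)(1+14x+x^{2})=1+9x-69x^{2}-5x^{3}$ indeed matches $3x(1-x)(14+2x)+(1+x)(1-34x+x^{2})$, and this economy (no separate analysis for $P$) is a nice feature absent from the classical treatment. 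What your route buys is conceptual transparency: each ODE is forced by a dimension count plus one Fourier coefficient. What it costs is self-containedness: steps (2) and (3) lean on inputs each comparable in depth to the statement itself (the theta representations of $E_{4}$, $E_{6}$ in the same nome, the quasimodular structure of $P$, and $E_{8}=E_{4}^{2}$), and step (2) still requires the inversion theorem to identify $\theta_{3}^{2}$ in Ramanujan's nome $q=e^{-y}$ with $z$ — precisely the normalization issue you flag as the crux. So your proposal is sound as a reduction to standard facts, an acceptable standard for a statement the paper itself only cites, but it is not the proof found in the cited references.
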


%\begin{fact}
%$E_{2k}(q) (2k\ge8)$ is a polynomial of 
%$P, Q, R$. For example, $E_{8}(q)=E_{4}^{2}(q)$.
%\end{fact}

\section{Eisenstein series}%:\UTF{2022}%
\label{s2}
\renewcommand{\ep}{\varepsilon}
\renewcommand{\P}{\mathcal{P}}
\renewcommand{\Q}{\mathcal{Q}}
\newcommand{\E}{\mathcal{E}}
\renewcommand{\R}{\mathcal{R}}

We first review some results on $P, Q, R$ originally due to Ramanujan \cite{ra}; see also Berndt \cite[Chapter 14]{be}. 
After that, we will introduce Hahn's Eisenstein-type  series $\P, \E, \Q$, new series $\psi_{n}(q), \ep_{n}(q)$ and then go into our main discussion.

% some related work are Berndt-Chan-Liu-Yesilyurt \cite{bcly}, Chan-Cooper-Toh \cite{cct}. 
 
\subsection{Ramanujan's work}%:\UTF{2022}%
\label{s21}
\begin{defn}
For $n\ge0$, let 
\[
T_{2n}(q)=
1+
\dsum_{k=1}^{\mug}(-1)^{k}
\k{
(6k-1)^{2n}q^{k(3k-1)/2}
+
(6k+1)^{2n}q^{k(3k+1)/2}
}.
\]
\end{defn}
In particular, 
$T_{0}(q)=(q;q)_{\mg}$; 
integers in the form $k(3k\pm 1)$ are \eh{pentagonal numbers}. Apparently, 
the relation $24qT_{2n}'=T_{2n+2}-T_{2n}$ holds 
because 
\[
24\ff{k(3k+1)}{2}=(6k+1)^{2}-1.
\]
\begin{thm}[{\cite[p.357]{be}}]\label{t1}
\begin{align*}
	\ff{T_{2}}{T_{0}}&=P.
	\\\ff{T_{4}}{T_{0}}&=3P^{2}-2Q.
	\\\ff{T_{6}}{T_{0}}&=15P^{3}-30PQ+16R.
	\\\ff{T_{8}}{T_{0}}&=105P^{4}-420P^{2}Q+
	448PR-132Q^{2}.
\end{align*}
%Moreover, $\tf{T_{2n}}{T_{0}}\in \zz[P, Q, R]$.
\end{thm}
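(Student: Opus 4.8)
The plan is to reduce all four identities to a single algebraic recurrence for the quotients $F_{n}=T_{2n}/T_{0}$ and then to iterate it. The two ingredients I would use are the differential recurrence $24qT_{2n}'=T_{2n+2}-T_{2n}$ recorded above and Ramanujan's differential equations for $P,Q,R$ from Fact \ref{f0}. The only additional analytic input needed is the logarithmic derivative of $T_{0}$.

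First I would compute that logarithmic derivative. Since $T_{0}=(q;q)_{\mug}=\dprod_{n=1}^{\mug}(1-q^{n})$, applying $q\,\frac{d}{dq}\log(\cdot)$ and expanding each geometric factor gives
\[
\frac{qT_{0}'}{T_{0}}=-\dsum_{n=1}^{\mug}\frac{nq^{n}}{1-q^{n}}=-\dsum_{m=1}^{\mug}\s(m)q^{m}=\frac{P-1}{24},
\]
where the last equality is exactly the definition $P=1-24\sum_{m\ge1}\s(m)q^{m}$.

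Next I would set $F_{n}=T_{2n}/T_{0}$, so $F_{0}=1$, and rewrite the recurrence $T_{2n+2}=T_{2n}+24qT_{2n}'$ in terms of the $F_{n}$. Writing $T_{2n}=T_{0}F_{n}$ and using the product rule, $qT_{2n}'=qT_{0}'\,F_{n}+T_{0}\,qF_{n}'$; dividing by $T_{0}$ and inserting the logarithmic derivative above collapses everything to
\[
F_{n+1}=F_{n}+24\Big(\tfrac{P-1}{24}F_{n}+qF_{n}'\Big)=PF_{n}+24\,qF_{n}'.
\]
The crucial point is that $qF_{n}'$ is evaluated by the chain rule, and Fact \ref{f0} rewrites each of $qP'$, $qQ'$, $qR'$ as a polynomial in $P,Q,R$; hence $F_{n}\mapsto F_{n+1}$ is a purely polynomial operation and stays inside $\cc[P,Q,R]$. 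Starting from $F_{0}=1$ one gets $F_{1}=P$ immediately, and successive applications yield $F_{2}=3P^{2}-2Q$, $F_{3}=15P^{3}-30PQ+16R$, and $F_{4}=105P^{4}-420P^{2}Q+448PR-132Q^{2}$, which are the four asserted identities.

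I expect the only real obstacle to be the bookkeeping in the final step: forming $24\,qF_{3}'$ means differentiating a degree-three polynomial, substituting all three Ramanujan equations, and then collecting the degree-four monomials in $P,Q,R$, which is where sign and coefficient errors are most likely. Everything upstream of that is forced once the master recurrence $F_{n+1}=PF_{n}+24qF_{n}'$ is established.
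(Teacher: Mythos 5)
Your argument is correct: the master recurrence $F_{n+1}=PF_{n}+24qF_{n}'$ follows exactly as you derive it from $24qT_{2n}'=T_{2n+2}-T_{2n}$ together with $qT_{0}'/T_{0}=(P-1)/24$, and iterating it with Ramanujan's equations from Fact \ref{f0} does reproduce all four polynomials (I checked the degree-four step: $24qF_{3}'=90P^{4}-390P^{2}Q-132Q^{2}+432PR$, which added to $PF_{3}$ gives $105P^{4}-420P^{2}Q+448PR-132Q^{2}$). The paper itself supplies no proof here --- it cites Berndt --- but your method is precisely the inductive differentiation scheme the paper carries out for the analogous Theorems \ref{t3} and \ref{t4}, so this is the intended route.
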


There are similar results on other series.

%\subsection{triangular series}%:\UTF{2022}%

\begin{defn}
For $n\ge0$, let 
\[
F_{n}(q)=
\dsum_{k=0}^{\mug}(-1)^{k}(2k+1)^{n+1}q^{k(k+1)/2}.
\]
\end{defn}
In particular, 
\[
F_{0}(q)=
\dsum_{k=0}^{\mug}(-1)^{k}(2k+1)q^{k(k+1)/2}
=\dprod_{n=1}^{\mug} (1-q^{n})^{3}
\]
is the \eh{Gauss-Jacobi identity}. 
Integers in the form $k(k+1)/2$ are \eh{triangular numbers}. 
These series satisfy 
$8qF_{n}'=F_{n+2}-F_{n}$ because 
\begin{align*}
	8\ff{k(k+1)}{2}&=(2k+1)^{2}-1.
\end{align*}

\begin{thm}[{\cite[p.363]{be}}]\label{t2}
\begin{align*}
	\ff{F_{2}}{F_{0}}&=P.
	\\\ff{F_{4}}{F_{0}}&=\ff{1}{3}(5P^{2}-2Q).
	\\\ff{F_{6}}{F_{0}}&=\ff{1}{9}(35P^{3}-42PQ+16R).
	\\\ff{F_{8}}{F_{0}}&=\ff{1}{3}(35P^{4}-84P^{2}Q-12Q^{2}+64PR).
\end{align*}
%Moreover, $\tff{F_{2n}}{F_{0}}\in\qq[P, Q, R]$.
\end{thm}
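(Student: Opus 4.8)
The plan is to reduce all four identities to a single recursion on the normalized quotients $G_{2n}=F_{2n}/F_{0}$, driven entirely by the differentiation rule $8qF_{n}'=F_{n+2}-F_{n}$ recorded just above the statement and by Ramanujan's differential equations in Fact~\ref{f0}. First I would establish the base case $F_{2}/F_{0}=P$. Writing the Gauss--Jacobi product $F_{0}=\dprod_{n=1}^{\mug}(1-q^{n})^{3}$ and taking the logarithmic derivative gives
\[
8q\ff{F_{0}'}{F_{0}}=-24\dsum_{N=1}^{\mug}\s(N)q^{N}=P-1,
\]
after expanding $q\,d/dq\,\log(1-q^{n})$ into the divisor sum $\sum_{N}\s(N)q^{N}$. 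Since the recurrence with $n=0$ reads $F_{2}=F_{0}+8qF_{0}'$, dividing by $F_{0}$ yields $G_{2}=1+(P-1)=P$, the first formula.

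The key step is to turn the additive recurrence into a clean operator recurrence on the $G_{2n}$. From $F_{2n+2}=F_{2n}+8qF_{2n}'$ and $F_{2n}=G_{2n}F_{0}$, the product rule gives $8qF_{2n}'=8q(G_{2n}'F_{0}+G_{2n}F_{0}')$, so dividing by $F_{0}$ and substituting $8qF_{0}'/F_{0}=P-1$ collapses the expression to
\[
G_{2n+2}=P\,G_{2n}+8q\,G_{2n}'.
\]
Thus each successive quotient is obtained from the previous one by applying the raising operator $P+8q\,d/dq$.

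It remains to check that this operator preserves polynomiality in $P,Q,R$ and then to run the recursion. For any polynomial $G$ in $P,Q,R$, the chain rule together with Fact~\ref{f0} gives
\[
q\,G'=\ff{P^{2}-Q}{12}\ff{\partial G}{\partial P}+\ff{PQ-R}{3}\ff{\partial G}{\partial Q}+\ff{PR-Q^{2}}{2}\ff{\partial G}{\partial R},
\]
which is again a polynomial in $P,Q,R$; hence by induction every $G_{2n}$ is such a polynomial. Starting from $G_{2}=P$ and applying the recurrence three times then produces $G_{4}=\tf{1}{3}(5P^{2}-2Q)$, then $G_{6}=\tf{1}{9}(35P^{3}-42PQ+16R)$, and finally $G_{8}=\tf{1}{3}(35P^{4}-84P^{2}Q-12Q^{2}+64PR)$, matching the four claimed identities.

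I expect the only real labor to be bookkeeping rather than conceptual: the operator recurrence does all the structural work, so the main obstacle is simply the careful algebra in the last step, where computing $8q\,G_{6}'$ and combining it with $P\,G_{6}$ to reach $G_{8}$ requires several applications of the three differential equations and a patient collection of like terms. The base-case logarithmic-derivative computation is the only place where the explicit product form of $F_{0}$ is actually needed.
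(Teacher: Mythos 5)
Your proof is correct: the logarithmic-derivative computation for the base case, the operator recurrence $G_{2n+2}=P\,G_{2n}+8q\,G_{2n}'$, and the three applications of Ramanujan's differential equations all check out and reproduce the stated coefficients. The paper itself offers no proof of Theorem \ref{t2} (it is quoted from Berndt), but your argument is exactly the method the paper does use for the analogous Theorems \ref{t3} and \ref{t4}, namely repeatedly applying $8q\,\tf{d}{dq}$ to $F_{2n}=G_{2n}F_{0}$ and invoking Fact \ref{f0}, so there is nothing further to add.
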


%\begin{thm}[Ewell]
%\[
%\dsum_{k\ge0}(-1)^{k+1}(2k+1)
%\s(n-\ff{k(k+1)}{2})=
%\begin{cases}
%	0&n\ne \ff{m(m+1)}{2}	\\
%	(-1)^{m}
%	\ff{m(m+1)(2m+1)}{6}
%	&n=\ff{m(m+1)}{2}\\
%\end{cases}
%\]
%\end{thm}

%\np%----------------------------------------------------
%\np%----------------------------------------------------
\subsection{Hahn's Eisenstein series}%:\UTF{2022}%

\renewcommand{\sh}{\widehat{\sigma}}
\renewcommand{\st}{\widetilde{\sigma}}
Following Hahn \cite{ha}, we introduce variants of 
divisor sum functions and Eisenstein-type series.
\begin{defn}
\begin{align*}
	\st_{s}(n)&=\sum_{d|n}(-1)^{d-1}d^{s}.
\\\sh_{s}(n)&=\sum_{d|n}(-1)^{n/d-1}d^{s}.
\end{align*}
%=
%
Often, we write 
$\st_{1}(n)=
\st_{}(n), 
\sh_{1}(n)=
\sh_{}(n)$. 
For convenience, set 
$\s_{s}(n)=\st_{s}{(n)}=\sh_{s}(n)=0$ whenever $n$ is not a nonnegative integer. We will define values of these functions at $n=0$ later on.
\end{defn}
\begin{rmk}\hf
\begin{enumerate}
\item These functions are related to Lambert-type series in the following way.
\begin{align*}
	\dsum_{k=1}^{\mug}\ff{k^{s}q^{k}}{1-q^{k}}
&=\dsum_{n=1}^{\mug}\s_{s}(n)q^{n}.
	\\\dsum_{k=1}^{\mug}\ff{(-1)^{k-1}k^{s}q^{k}}{1-q^{k}}
&=\dsum_{n=1}^{\mug}\st_{s}(n)q^{n}.
	\\\dsum_{k=1}^{\mug}\ff{k^{s}q^{k}}{1+q^{k}}
&=\dsum_{n=1}^{\mug}\sh_{s}(n)q^{n}.
\end{align*}
\item Observe the following properties.
\[
\st_{s}(n)=\s_{s}(n)-2^{s+1}\s_{s}
\k{\ff{n}{2}}.
\]
\[
\sh_{s}(n)=\s_{s}(n)-2\s_{s}\k{\ff{n}{2}}.
\]
\[\st_{s}(n)=\sum_{
\substack{d|n\\d \text{ odd}}
}
d^{s}-
\sum_{
\substack{d|n\\d \text{ even}}
}
d^{s}, \q 
\sh(n)=
	\sum_{\substack{d|n\\d \text{ odd}}}d^{s}.
\]	
In particular, if $n$ is odd, then 
\[
\st_{s}(n)=\s_{s}(n)=\sh_{s}(n).
\]
\end{enumerate}

\end{rmk}

\begin{defn}
Let 
$\st(0)=\tf{1}{8}, \sh(0)=\tf{1}{24}, \st_{3}(0)=-\tf{1}{16}$ and 
\begin{align*}
	\P(q)&=
\dsum_{n=0}^{\mug}8\st(n)q^{n},
	\\\E(q)&=
	\dsum_{n=0}^{\mug}24\sh(n)q^{n},
	\\\Q(q)&=
	\dsum_{n=0}^{\mug}(-16\st_{3}(n))q^{n}.
\end{align*}
\end{defn}
\begin{prop}[{\cite[p.1598, 1595]{ha}}]\label{p1}
Let $x, z, q$ be as in Introduction. Then 
\begin{align*}
	\P&=z^{2}(1-x)+4x(1-x)z\ff{dz}{dx},
	\\\E&=z^{2}(1+x),
	\\\Q&=z^{4}(1-x),
	\\q\P'&=\ff{\P^{2}-\Q}{4},
	\\q\E'&=\ff{\E\P-\Q}{2},
	\\q\Q'&=\P\Q-\E\Q.
\end{align*}
\end{prop}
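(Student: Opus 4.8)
The plan is to reduce the proposition to the classical parametrization of $P,Q,R$ recorded in Fact \ref{f0}. First I would write each of the three series as a linear combination of the ordinary Eisenstein series evaluated at $q$ and at $q^{2}$. Starting from the Lambert expansions in the Remark, the relations $\st_{s}(n)=\s_{s}(n)-2^{s+1}\s_{s}(\tfrac n2)$ and $\sh_{s}(n)=\s_{s}(n)-2\s_{s}(\tfrac n2)$, and the elementary $\sum_{n\ge1}\s_{s}(\tfrac n2)q^{n}=\sum_{m\ge1}\s_{s}(m)q^{2m}$, the $\s_{s}(n/2)$ terms reassemble into series in $q^{2}$, and matching the constant terms fixed by $\st(0)=\tf18$, $\sh(0)=\tf1{24}$, $\st_{3}(0)=-\tf1{16}$ yields
\[
\E=2P(q^{2})-P(q),\qquad \P=\frac{4P(q^{2})-P(q)}{3},\qquad \Q=\frac{16Q(q^{2})-Q(q)}{15}.
\]
These identities, in which the chosen values at $n=0$ are precisely what force the constant term $1$ on each side, are the backbone of the argument.

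To obtain the closed forms I would substitute the Fact \ref{f0} expressions for $P(q)$ and $Q(q)$ together with their degree-two analogues
\[
P(q^{2})=(1-2x)z^{2}+6x(1-x)z\frac{dz}{dx},\qquad Q(q^{2})=(1-x+x^{2})z^{4},
\]
which come from applying the same elliptic parametrization at the nome $q^{2}$ through the modular equation of degree two (Berndt \cite{be}). After collecting the $z^{2}$, $z\frac{dz}{dx}$ and $z^{4}$ contributions, the first-derivative terms cancel in $\E$, are absent in $\Q$, and survive only in $\P$, and one reads off the three displayed algebraic formulas.

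For the three differential relations I would differentiate the closed forms just found, using the operator identity $q\frac{d}{dq}=x(1-x)z^{2}\frac{d}{dx}$ (a standard consequence of the parametrization) and the Gaussian hypergeometric equation
\[
x(1-x)\frac{d^{2}z}{dx^{2}}+(1-2x)\frac{dz}{dx}-\tfrac14 z=0
\]
satisfied by $z={}_{2}F_{1}(\tf12,\tf12;1;x)$. The relations for $\E$ and $\Q$ reduce to polynomial identities in $z$, $x$ and $\frac{dz}{dx}$ in which no second derivative occurs; only the relation for $\P$ generates a $\frac{d^{2}z}{dx^{2}}$ term, which the hypergeometric equation removes, after which the two sides collapse to the same expression.

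The main obstacle is the middle step: pinning down and justifying the degree-two parametrizations of $P(q^{2})$ and $Q(q^{2})$ in terms of the base modulus, since this is where all the genuine elliptic-function content sits, while everything flanking it is bookkeeping. If one prefers to avoid importing those formulas for the differential part, the three linear identities can instead be fed directly into the Ramanujan system $qP'=\tf{P^{2}-Q}{12}$, $qQ'=\tf{PQ-R}{3}$, $qR'=\tf{PR-Q^{2}}{2}$ of Fact \ref{f0} (inserting the chain-rule factor $2$ for the $q^{2}$ arguments); each relation then becomes equivalent to a short auxiliary identity such as $5\E^{2}=4Q(q^{2})+Q(q)$, verifiable either from the parametrization or, since both sides are modular forms of fixed weight, by comparing finitely many coefficients.
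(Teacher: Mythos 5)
The paper does not actually prove this proposition: it is imported verbatim from Hahn with a page citation, so there is no internal argument to compare yours against. Judged on its own, your derivation is sound and would serve as a self-contained proof. The three linear identities $\E=2P(q^{2})-P(q)$, $\P=\tf{1}{3}(4P(q^{2})-P(q))$, $\Q=\tf{1}{15}(16Q(q^{2})-Q(q))$ check out against the definitions and the assigned values at $n=0$, and combining them with Fact \ref{f0} and the degree-two evaluations $P(q^{2})=(1-2x)z^{2}+6x(1-x)z\,dz/dx$ and $Q(q^{2})=z^{4}(1-x+x^{2})$ (the latter is exactly what the paper itself quotes from Berndt, p.~126, in the second proof of Corollary \ref{cor1}) yields the closed forms; the differential relations then follow either by your route (a), where only the $\P$ case requires the hypergeometric equation, or by route (b) via the auxiliary identity $5\E^{2}=4Q(q^{2})+Q(q)$.

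Two points to fix. First, your computation does not return the displayed formula for $\Q$: it returns $\Q=z^{4}(1-x)^{2}$, since $16(1-x+x^{2})-(1+14x+x^{2})=15(1-x)^{2}$. The statement $\Q=z^{4}(1-x)$ in the proposition is a typo --- the paper itself uses $\Q=z^{4}(1-x)^{2}$ immediately afterward to obtain $\R=\E\Q$, and Lemma \ref{l1} (namely $Q=4\E^{2}-3\Q$) forces the square as well --- so you should state that you are proving the corrected formula rather than claim to ``read off'' the printed one. Second, in route (b) the single identity $5\E^{2}=4Q(q^{2})+Q(q)$ does settle the $q\P'$ and $q\E'$ relations (in both cases the discrepancy reduces exactly to $5\E^{2}-4Q(q^{2})-Q(q)=0$), but it does not suffice for $q\Q'$, which after the chain rule involves $R(q)$ and $R(q^{2})$ and needs a further degree-two relation such as $8R(q^{2})-R(q)=7\E^{3}$. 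Since route (a) disposes of $q\E'$ and $q\Q'$ by a two-line differentiation with no second derivatives appearing, it is cleaner to commit to (a) throughout and reserve (b) as a consistency check.
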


Now, in addition to Hahn's $\P, \E, \Q$, let us 
introduce 
\[
\R(q)=
\dsum_{n=0}^{\mug}8\st_{5}(n)q^{n}.
\]
Then $\R(q)=\E(q)\Q(q)$ because  
$\E(q)=z^{2}(1+x), \Q(q)=z^{4}(1-x)^{2}$
and  
$\R(q)=z^{6}(1-x)(1-x^{2})$ \cite[p.130 Entry 14 (vi)]{be}.

\subsection{Unsigned series on triangular numbers}

\begin{defn}
\[
\psi_{n}(q)=
\dsum_{k=0}^{\mug}(2k+1)^{n}q^{k(k+1)/2}.
\]
\end{defn}
%Thus, this is the unsigned series with exponents all   numbers.
In particular, $\psi_{0}(q)=\psi(q)$. 
Observe that 
$8q\psi_{n}'=\psi_{n+2}-\psi_{n}$.
%\begin{proof}
%These follow from identities 
%%similar to $T_{2n}$ and $J_{2n+1}$.
%\end{proof}

%\np%----------------------------------------------------

\begin{thm}\label{t3}
\begin{align*}
	\ff{\psi_{2}}{\psi_{0}}&=\P.
	\\\ff{\psi_{4}}{\psi_{0}}&=3\P^{2}-2\Q.
	\\\ff{\psi_{6}}{\psi_{0}}&=15\P^{3}-30\P\Q
	+16\R.
\end{align*}
Moreover, each $n\ge0$, 
\[
\ff{\psi_{2n}}{\psi_{0}}=
\sum_{\substack{i, j, k, l\ge0\\
2i+2j+4k+6l=2n}}K_{ijkl}\P^{i}\E^{j}\Q^{k}\R^{l}
\]
for some integers $K_{ijkl}$.
%$\tf{\psi_{2n}}{\psi_{0}}\in \zz[\P, \E, \Q]_{0}$ where 
%$\zz[\P, \E, \Q]_{0}$ is the set of polynomials in $\P, \E, \Q$ over $\zz$ without the constant term.
\end{thm}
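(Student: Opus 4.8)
The plan is to convert the differential recurrence $8q\psi_{n}'=\psi_{n+2}-\psi_{n}$ into a recurrence purely among the quotients $g_{n}:=\psi_{2n}/\psi_{0}$, and then to run an induction powered by Proposition \ref{p1}, exactly as Theorems \ref{t1} and \ref{t2} handle $T_{2n}/T_{0}$ and $F_{2n}/F_{0}$. Taking $n=0$ in the recurrence gives $8q\psi_{0}'/\psi_{0}=\psi_{2}/\psi_{0}-1=\P-1$, where the last step is the first asserted identity $\psi_{2}/\psi_{0}=\P$. Writing $\psi_{2n}=g_{n}\psi_{0}$, differentiating, and substituting into $\psi_{2n+2}=\psi_{2n}+8q\psi_{2n}'$, I would obtain the key recurrence
\[
g_{n+1}=\P\,g_{n}+8q\,g_{n}'.
\]
This immediately gives the explicit formulas: $g_{1}=\P$; then $g_{2}=\P^{2}+8q\P'=3\P^{2}-2\Q$ using $8q\P'=2\P^{2}-2\Q$ from Proposition \ref{p1}; and a short computation with $8q\Q'=8\P\Q-8\E\Q$ yields $g_{3}=15\P^{3}-30\P\Q+16\E\Q$, which equals $15\P^{3}-30\P\Q+16\R$ since $\R=\E\Q$.

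For the general claim I would make the weighted degree explicit. Introduce formal symbols graded by $\deg\P=\deg\E=2$, $\deg\Q=4$, and regard the recurrence as taking place in the free graded polynomial ring $\zz[\P,\E,\Q]$ on these symbols; passing to formal variables sidesteps any question of algebraic independence of the actual $q$-series, since I only need to exhibit one admissible expression. The operator $D:=8q\,\tfrac{d}{dq}$ is a derivation, and Proposition \ref{p1} shows it acts on the generators by
\[
D\P=2\P^{2}-2\Q,\quad D\E=4\E\P-4\Q,\quad D\Q=8\P\Q-8\E\Q,
\]
each an integer-coefficient polynomial that is homogeneous of weight exactly two higher than its input. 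Hence $D$ restricts to an integer, weight-raising (by $2$) derivation of $\zz[\P,\E,\Q]$, and multiplication by $\P$ is likewise integer and weight-raising by $2$. Since $g_{0}=1$ has weight $0$, the recurrence $g_{n+1}=\P g_{n}+Dg_{n}$ shows by induction that $g_{n}$ is an integer polynomial in $\P,\E,\Q$, homogeneous of weight $2n$. A weight-$2n$ monomial is $\P^{i}\E^{j}\Q^{k}$ with $2i+2j+4k=2n$, i.e. the $l=0$ instance of $\P^{i}\E^{j}\Q^{k}\R^{l}$, so this already produces integers $K_{ijkl}$ with $2i+2j+4k+6l=2n$, as required.

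The one genuinely structural step --- and the place where all the work sits --- is verifying that $D$ preserves $\zz[\P,\E,\Q]$ with the stated weight shift; this is precisely the content of the three differentiation formulas in Proposition \ref{p1}, and it is what makes the induction close. Everything else is formal: $8q\,d/dq$ is automatically a derivation, integrality propagates because $D$ of each generator has integer coefficients, and homogeneity is forced once the generators are checked. The mild remaining point is that the representation in $\P,\E,\Q$ alone can always be rewritten to involve $\R$ via $\R=\E\Q$, which only enlarges the allowed monomials and so does not affect existence; this also explains why $\R$ appears naturally in the compact form of $g_{3}$ above.
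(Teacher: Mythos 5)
Your proof is correct and follows essentially the same route as the paper: the recurrence $8q\psi_{2n}'=\psi_{2n+2}-\psi_{2n}$ combined with the differential equations of Proposition \ref{p1} gives $\psi_2/\psi_0=\P$ and the next two quotients by direct computation, with the general case by induction. Your repackaging as $g_{n+1}=\P g_{n}+8qg_{n}'$ and the explicit weight-grading argument actually supply the details the paper dismisses as ``routine,'' and your computed coefficients agree with the stated identities.
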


\begin{proof}
$8q\psi_{0}'=\ps_{2}-\psi_{0}$
implies 
\[
\ff{\ps_{2}}{\psi_{0}}=1+8q\ff{\psi_{0}'}{\psi_{0}}=\P.
\]
Next, 
apply the operator 
$8q\tf{d}{dq}$ to $\psi_{2}=\psi_{0}\P$.
Using Proposition \ref{p1}, we see that 
\begin{align*}
	\ps_{4}-\ps_{2}&=
(\ps_{2}-\ps_{0})\P+\psi_{0} 8 \ff{\P^{2}-\Q}{4},
	\\\ps_{4}&=
\ps_{2}+
(\ps_{2}-\ps_{0})\P+\psi_{0} 8 \ff{\P^{2}-\Q}{4}
=3\P^{2}-2\Q.
\end{align*}
Continuing this algorithm, 
$8q\psi_{4}'=8q((3\P^{2}-2\Q)\psi_{0})'$ shows that 
\begin{align*}
	\psi_{6}-\psi_{4}&=3\k{
8\cdot 2\P\ff{\P^{2}-\Q}{4}\psi_{0}+\P^{2}(\psi_{2}-\psi_{0})}
-2\k{
8(\P\Q-\R)\psi_{0}+\Q(\psi_{2}-\psi_{0})}
	\\&=
(15\P^{3}-30\P\Q
	+16\R-3\P^{2}+2\Q)\psi_{0}
\end{align*}
and bring $-\psi_{4}$ to the right hand side. 
We can confirm the last assertion by induction on $2n$. Since this is a routine, we omit details.
%, as we just have seen the cases for $2n=2, 4, 6$. 
%Assume here that 
%\[
%g_{2n}(\P, \E, \Q):=\ff{\psi_{2n}}{\psi_{0}}\in \zz[\P, \E, \Q]_{0}, \q 2n\ge6.
%\]
%Then it follows from $8q(\psi_{2n})'=8q(g_{2n}\psi_{0})'$ 
%that 
%\begin{align*}
%	\psi_{2n+2}&=\psi_{2n}+
%	8qg_{2n}'\psi_{0}+g_{2n}(\psi_{2}-\psi_{0})
%	\\&=(8qg_{2n}'+g_{2n}P)\psi_{0}.
%\end{align*}
%Clearly, $g_{2n+2}(\P, \E, \Q):=
%8qg_{2n}'(\P, \E, \Q)+g_{2n}(\P, \E, \Q)P$
%satisfies 
%$g_{2n+2}\in \zz[\P, \E, \Q]_{0}$ 
%since $8q\P', 8q\E', 8q\Q', g_{2n}P\in \zz[\P, \E, \Q]_{0}$.
\end{proof}
\begin{rmk}
Thus, 
$P\mto \P, 
Q\mto \Q, 
R\mto \R$ gives the correspondence 
between $\tf{T_{2}}{T_{0}}, 
\tf{T_{4}}{T_{0}}, \tf{T_{6}}{T_{0}}$ 
and  $\tf{\psi_{2}}{\psi_{0}}, 
\tf{\psi_{4}}{\psi_{0}}, \tf{\psi_{6}}{\psi_{0}}$;
this is because 
\[
24qP'=2(P^{2}-Q),
8q\P'=2(\P^{2}-\Q),
\]
\[
24qQ'=8(PQ-R),
8q\E'=8(\P\Q-\R)
\]
and $\tf{T_{2}}{T_{0}}=P, \tf{\psi_{2}}{\psi_{0}}=\P.$
However, 
$\tff{\psi_{8}}{\psi_{0}}$ is not quite equal to 
\[
105\P^{4}-420\P^{2}\Q+448\P\R-132\Q^{2}
\]
as $24qR'=12(PR-Q^{2})$ while 
\begin{align*}
	8q\R'&=8q(\E'\Q+\E\Q')
	\\&=4(\P\E-\Q)\Q+8\E(\P\Q-\R)
	\\&=12\P\R-4\Q^{2}-8\E\R
	\ne 12(\P\R-\Q^{2}).
\end{align*}
\end{rmk}

%\np%----------------------------------------------------

%\begin{prop}
%\[
%\ep(q)=f(q, q^{2}).
%\]
%\end{prop}
%\begin{proof}
%
%\end{proof}
%
%\[
%E(q)=
%\]
\subsection{Unsigned series on pentagonal numbers}%:\UTF{2022}%
\begin{defn}For $n\ge0$, let 
\[
\ep_{n}(q)=
\dsum_{k=-\mg}^{\mug}(6k+1)^{n}q^{k(3k+1)/2}.
\]
\end{defn}
Observe again that $24q\ep_{n}'=\ep_{n+2}-\ep_{n}$.
\begin{fact}[{\cite[p.114]{be}}]\label{f1}
\begin{align*}
	\ep_{1}(q)&=\dsum_{k=-\mg}^{\mug}(6k+1)q^{k(3k+1)/2}
=\varphi^{2}(-q)f(-q).
\end{align*}
\end{fact}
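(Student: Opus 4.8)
The plan is to obtain $\ep_1(q)$ by differentiating the quintuple product identity in its auxiliary variable and evaluating at $1$. This route is essentially forced: the recurrence $24q\ep_n'=\ep_{n+2}-\ep_n$ only connects indices of the same parity, so it never reaches the odd index $1$ starting from $\ep_0$, and the combinatorial factor $6k+1$ is precisely (twice) the $k$-derivative of the exponent $k(3k+1)/2$. Both observations point to introducing a formal variable $t$ that marks $k$ and then differentiating in $t$.

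Concretely, I would start from the quintuple product identity
\[
\dprod_{n=1}^{\mg}(1-q^{n})(1-tq^{n})(1-t^{-1}q^{n-1})(1-t^{2}q^{2n-1})(1-t^{-2}q^{2n-1})
=\dsum_{k=-\mg}^{\mg}q^{k(3k+1)/2}\bigl(t^{3k}-t^{-3k-1}\bigr).
\]
Both sides vanish at $t=1$: on the right every summand carries the factor $t^{3k}-t^{-3k-1}$, and on the left the $n=1$ factor of the third product is $1-t^{-1}$. Since the $t$-derivative of $t^{3k}-t^{-3k-1}$ at $t=1$ equals $3k+(3k+1)=6k+1$, differentiating the right-hand side and setting $t=1$ yields exactly $\dsum_{k}(6k+1)q^{k(3k+1)/2}=\ep_1(q)$.

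It then remains to differentiate the left-hand side. Writing the product as $(1-t^{-1})H(t)$, where $H$ gathers the remaining factors and is regular and nonzero at $t=1$, the derivative of the product at $t=1$ collapses to $H(1)$, because the surviving contribution is the derivative of $1-t^{-1}$ (equal to $1$ at $t=1$) times $H(1)$. Setting $t=1$ in the remaining factors produces three copies of $(q;q)_{\mg}$ and two copies of $(q;q^{2})_{\mg}$, so
\[
H(1)=(q;q)_{\mg}^{3}(q;q^{2})_{\mg}^{2}=\ff{(q;q)_{\mg}^{5}}{(q^{2};q^{2})_{\mg}^{2}},
\]
using $(q;q^{2})_{\mg}=(q;q)_{\mg}/(q^{2};q^{2})_{\mg}$. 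Finally, the Jacobi triple product gives $\varphi(-q)=(q;q)_{\mg}^{2}/(q^{2};q^{2})_{\mg}$, while $f(-q)=(q;q)_{\mg}$ by Euler's pentagonal number theorem, whence $\varphi^{2}(-q)f(-q)=(q;q)_{\mg}^{5}/(q^{2};q^{2})_{\mg}^{2}=H(1)=\ep_1(q)$, as claimed.

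The genuine obstacle is conceptual rather than computational: recognizing that $\ep_1$ is the $t$-derivative at $t=1$ of the quintuple product is the whole idea, after which every step is mechanical. The only point demanding care is the bookkeeping at the simple zero $t=1$, where one must check that $1-t^{-1}$ is the unique vanishing factor---all others reduce at $t=1$ to $1-q^{m}$ or $1-q^{2m-1}$ with $|q|<1$, hence are nonzero---so that the first-order evaluation is legitimate and no higher-order cancellation interferes.
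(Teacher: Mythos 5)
Your argument is correct. Note, however, that the paper does not prove this statement at all: it is quoted as a ``Fact'' with a citation to Berndt's book, so there is no in-paper proof to compare against. Your derivation --- differentiating the quintuple product identity in the auxiliary variable at $t=1$, isolating the unique vanishing factor $1-t^{-1}$, and matching $H(1)=(q;q)_{\mg}^{3}(q;q^{2})_{\mg}^{2}=(q;q)_{\mg}^{5}/(q^{2};q^{2})_{\mg}^{2}$ with $\varphi^{2}(-q)f(-q)$ --- is the standard route to this identity and all the steps check out: the $t$-derivative of $t^{3k}-t^{-3k-1}$ at $t=1$ is indeed $6k+1$, the product-rule collapse at the simple zero is legitimate, and the evaluation of the remaining factors at $t=1$ is right. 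The only things worth making explicit if this were to stand as a self-contained proof are (i) a reference for the exact form of the quintuple product identity you use, and (ii) a one-line remark that both sides are analytic in $t$ near $t=1$ for fixed $|q|<1$ (locally uniform convergence of the product and of the series), so the term-by-term differentiation is justified. Neither is a gap in substance.
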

As a consequence of this and $\varphi(-q)=(q;q)_{\mg}(q;q^{2})_{\mg}$, we have 
\[
\ep_{1}(q)=
(q;q)_{\mg}^{2}(q;q^{2})_{\mg}^{2}(q;q)_{\mug}
=\dprod_{n=1}^{\mug} (1-q^{n})^{3}(1-q^{2n-1})^{2}.
\]
Hence 
\[
24q\ff{\ep_{1}'}{\ep_{1}}=
-24
{\dsum_{n=1}^{\mug}
(3\s(n)+2\sh(n))q^{n}
}=
3(P-1)-2(\E-1)
.
\]
Now $24q\ep_{1}'=\ep_{3}-\ep_{1}$ further implies that 
\[
\ff{\ep_{3}}{\ep_{1}}=
1+24q\ff{\ep_{1}'}{\ep_{1}}
=3P-2\E.\]
%\begin{prop}
%$\ff{\ep_{3}}{\ep_{1}}=4P-3\P$.
%\end{prop}
%\begin{proof}
%$
%\[
%\ff{\ep_{3}}{\ep_{1}}=
%1+24q\ff{\ep_{1}'}{\ep_{1}}
%=
%1-24
%\k{\dsum_{n=1}^{\mug}
%(4\s(n)-\sh(n))q^{n}
%}
%=1+4(P-1)-3(\P-1)=4P-3\P.
%\]
%\end{proof}
Because of this relation, we expect that 
$\tf{\ep_{2n+1}}{\ep_{1}}$ 
be a polynomial of $P, Q, R$ and $\P, \E, \Q$ altogether.
However, as we see just below, 
each of $P, Q, R$ is indeed a polynomial of 
$\P, \E, \Q$ so that 
$\tf{\ep_{2n+1}}{\ep_{1}}$ is a polynomial in only $\P, \E, \Q$.
\begin{lem}\label{l1}
\begin{align*}
	P&=3\P-2\E.
	\\Q&=4\E^{2}-3\Q.
	\\R&=-8\E^{3}+9\R.
\end{align*}
%Moreover, $R\in\qq[\P, \E, \Q]$.
\end{lem}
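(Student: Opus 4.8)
The plan is to verify all three identities by substituting the elliptic-function parametrizations recorded in Fact \ref{f0} and Proposition \ref{p1}, thereby reducing each claim to an elementary polynomial identity in the single variable $x$ (together with the factor $z\,\ff{dz}{dx}$ in the first case). Since $P,Q,R$ and $\P,\E,\Q,\R$ are all expressed through the common quantities $z$, $x$, and $\ff{dz}{dx}$, equality of two such expressions as $q$-series will follow once we check equality of the corresponding closed forms in $z,x$.

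For the first identity I would compute $3\P-2\E$ directly. Writing $3\P=3z^{2}(1-x)+12x(1-x)z\,\ff{dz}{dx}$ and $2\E=2z^{2}(1+x)$, the crucial observation is that the derivative term of $P$, namely $12x(1-x)z\,\ff{dz}{dx}$, already matches the derivative term of $3\P$ on the nose (because Hahn's $\P$ carries the coefficient $4x(1-x)$ and $3\cdot 4=12$). Collecting the remaining $z^{2}$-terms gives $z^{2}\bigl(3(1-x)-2(1+x)\bigr)=(1-5x)z^{2}$, which is exactly the non-derivative part of $P$ in Fact \ref{f0}; hence $P=3\P-2\E$.

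The second and third identities involve no derivative term, so they are pure polynomial identities in $x$. Here I must use the parametrization $\Q=z^{4}(1-x)^{2}$ (as in the paragraph following Proposition \ref{p1}, which is consistent with $\R=\E\Q$ and $\R=z^{6}(1-x)(1-x^{2})$), rather than the misprinted $z^{4}(1-x)$ in the displayed statement of Proposition \ref{p1}. For $Q$ I would expand $4\E^{2}-3\Q=z^{4}\bigl(4(1+x)^{2}-3(1-x)^{2}\bigr)=z^{4}(1+14x+x^{2})=Q$. For $R$ I would factor $z^{6}(1+x)$ out of $-8\E^{3}+9\R=z^{6}\bigl(-8(1+x)^{3}+9(1-x)^{2}(1+x)\bigr)$ and check that the bracketed factor reduces to $(1+x)(1-34x+x^{2})$, matching the parametrization of $R$.

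I do not anticipate a genuine obstacle: each identity is a short algebraic verification once the parametrizations are in hand. The only point requiring care is the discrepancy in the formula for $\Q$ between Proposition \ref{p1} and the text below it; using the incorrect $z^{4}(1-x)$ would break both the $Q$ and $R$ identities, so the verification doubles as a consistency check forcing $\Q=z^{4}(1-x)^{2}$.
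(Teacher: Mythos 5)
Your verification of the first identity is exactly the paper's: both of you substitute the parametrizations of $P$ from Fact \ref{f0} and of $\P,\E$ from Proposition \ref{p1}, match the $z\,\ff{dz}{dx}$ terms via $3\cdot 4x(1-x)=12x(1-x)$, and collect $3(1-x)-2(1+x)=1-5x$. For the second and third identities, however, you take a genuinely different route. The paper never touches the parametrizations of $Q$, $R$, $\Q$, $\R$ again: it rewrites the Ramanujan differential equations as $Q=P^{2}-12qP'$ and $R=PQ-3qQ'$, substitutes the already-proved $P=3\P-2\E$, and then eliminates all derivatives using $q\P'=\tf{\P^{2}-\Q}{4}$, $q\E'=\tf{\E\P-\Q}{2}$, $q\Q'=\P\Q-\E\Q$; each identity thus follows from the previous one plus pure polynomial algebra in $\P,\E,\Q,\R$. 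You instead verify $4(1+x)^{2}-3(1-x)^{2}=1+14x+x^{2}$ and $-8(1+x)^{3}+9(1-x)^{2}(1+x)=(1+x)(1-34x+x^{2})$ directly. Both arguments are correct and about equally short. Your approach has the merit of being self-contained at the level of closed forms and of forcing you to notice that the displayed $\Q=z^{4}(1-x)$ in Proposition \ref{p1} is inconsistent with the later statement $\Q=z^{4}(1-x)^{2}$ (the latter is the correct one, as your consistency check shows); the paper's approach buys independence from that parametrization entirely, since it only consumes the differential equations, and it sets up the iterative derivative-elimination technique that is reused throughout Section \ref{s2} (Theorems \ref{t3}, \ref{t4}, \ref{tr}).
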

\begin{proof}
Let $x, z, q$ be as in Introduction. 
Thanks to Fact \ref{f0} and Proposition \ref{p1}, 
we find 
\begin{align*}
	P-(3\P-2\E)&=
	\k{(1-5x)z^{2}+12x(1-x)z\tf{dz}{dx}}
\\&\ph{=}-3\k{(1-x)z^{2}+4x(1-x)z\tf{dz}{dx}}
+2z^{2}(1+x)=0.
\end{align*}
\begin{align*}
	Q&=P^{2}-12qP'
	\\&=(3\P-2\E)^{2}-12q (3\P'-2\E')
	\\&=(3\P-2\E)^{2}
	-36 \ff{\P^{2}-\Q}{4}+24
\ff{\P\E-\Q}{2}
	\\&=4\E^{2}-3\Q.
\end{align*}
The last equality follows from 
\begin{align*}
	R&=PQ-3qQ'
	\\&=(3\P-2\E)(4\E^{2}-3\Q)
	-3\k{4\cdot 2\E \ff{\P\E-\Q}{2}}-3(-3)(\P\Q-\R)
	\\&=-8\E^{3}+9\R.
\end{align*}
\end{proof}

\begin{thm}\label{t4}
\begin{align*}
	\ff{\ep_{3}}{\ep_{1}}&=9\P-8\E.
	\\\ff{\ep_{5}}{\ep_{1}}&=
	135\P^{2}-240\P\E+64\E^{2}+42\Q.
\end{align*}
Moreover, each $n\ge0$, 
\[
\ff{\ep_{2n+1}}{\ep_{1}}=
\sum_{\substack{i, j, k, l\ge0\\
2i+2j+4k+6l=2n}}K_{ijkl}\P^{i}\E^{j}\Q^{k}\R^{l}
\]
for some integers $K_{ijkl}$.
\end{thm}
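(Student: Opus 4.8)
The plan is to bootstrap everything from the single relation $\ff{\ep_3}{\ep_1}=3P-2\E$ established just above, rewrite it in the variables $\P,\E,\Q,\R$ via Lemma \ref{l1}, and then propagate through the recurrence $24q\ep_{2n+1}'=\ep_{2n+3}-\ep_{2n+1}$. First I would dispose of the two explicit identities. For $\ff{\ep_3}{\ep_1}$, substitute $P=3\P-2\E$ from Lemma \ref{l1} into $3P-2\E$ to obtain $9\P-8\E$. For $\ff{\ep_5}{\ep_1}$, write $\ep_3=\ep_1(9\P-8\E)$, apply $24q\tf{d}{dq}$, and feed in $24q\ep_1'=\ep_3-\ep_1$ together with the derivative formulas $24q\P'=6(\P^2-\Q)$ and $24q\E'=12(\P\E-\Q)$ coming from Proposition \ref{p1}; collecting terms yields $135\P^2-240\P\E+64\E^2+42\Q$.

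For the general claim I would argue by induction on $n$. Setting $g_n=\ff{\ep_{2n+1}}{\ep_1}$, the recurrence gives $\ep_{2n+3}=\ep_{2n+1}+24q\ep_{2n+1}'$; differentiating $\ep_{2n+1}=\ep_1 g_n$ and using $24q\ep_1'=\ep_1(g_1-1)$ with $g_1=9\P-8\E$ collapses everything to the clean recursion
\[
g_{n+1}=(9\P-8\E)\,g_n+24q\,\tf{d}{dq}g_n.
\]
Thus the entire problem reduces to showing that the operator $D=24q\tf{d}{dq}$ maps the polynomial ring $\zz[\P,\E,\Q,\R]$ into itself while controlling degrees. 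Since we only need \emph{existence} of integer coefficients $K_{ijkl}$, the fact that $\R=\E\Q$ are algebraically dependent causes no difficulty.

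The device that makes this precise is a weight grading in which $\P,\E$ have weight $2$, $\Q$ weight $4$, and $\R$ weight $6$, so that the monomials $\P^i\E^j\Q^k\R^l$ with $2i+2j+4k+6l=2n$ are exactly the weight-$2n$ elements. I would verify that $D$ is a derivation raising weight by exactly $2$ on each generator: from Proposition \ref{p1} one reads off $D\P=6(\P^2-\Q)$, $D\E=12(\P\E-\Q)$, and $D\Q=24(\P\Q-\E\Q)$, each of integer coefficients and of weight one step higher, while the computation in the Remark after Theorem \ref{t3}, namely $8q\R'=12\P\R-4\Q^2-8\E\R$, gives $D\R=36\P\R-12\Q^2-24\E\R$ of weight $8$. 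By the Leibniz rule $D$ then raises the weight of every homogeneous polynomial by $2$ and preserves integrality.

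Finishing is then immediate: $g_0=1$ is homogeneous of weight $0$ over $\zz$, and if $g_n$ is homogeneous of weight $2n$ over $\zz$, then both $(9\P-8\E)g_n$ and $Dg_n$ are homogeneous of weight $2n+2$ over $\zz$, hence so is their sum $g_{n+1}$. This is exactly the asserted form $\sum_{2i+2j+4k+6l=2(n+1)}K_{ijkl}\P^i\E^j\Q^k\R^l$. I expect the only genuine content to be the verification that $D$ respects the grading and the ring $\zz[\P,\E,\Q,\R]$, in particular pinning down $D\R$; once that is in hand the induction is purely formal and the rest is bookkeeping, which is why one can reasonably omit the explicit details.
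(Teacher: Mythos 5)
Your proposal is correct and follows essentially the same route as the paper: compute $\ff{\ep_3}{\ep_1}$ via Lemma \ref{l1}, obtain $\ff{\ep_5}{\ep_1}$ by applying $24q\tf{d}{dq}$ with the derivative formulas from Proposition \ref{p1}, and induct for the general case. The only difference is that you make precise the induction the paper dismisses as routine, via the recursion $g_{n+1}=(9\P-8\E)g_n+Dg_n$ and the observation that $D$ is an integer-coefficient derivation raising the weight by $2$ (including the correct $D\R=36\P\R-12\Q^2-24\E\R$), which is a welcome sharpening rather than a different method.
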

\begin{proof}
We readily find that 
\[
\ff{\ep_{3}}{\ep_{1}}=
3P-2\E=
3(3\P-2\E)-2\E=9\P-8\E.
\]
Apply the operator $24q\tf{d}{dq}$ to $\ep_{3}=
(9\P-8\E)\ep_{1}.$
\begin{align*}
	\ep_{5}-\ep_{3}&=
	9\k{
	24\ff{\P^{2}-\Q}{4}\ep_{1}
	+\P(\ep_{3}-\ep_{1})
	}
	\\&\ph{=}-8\k{
	24\ff{\P\E-\Q}{2}
	\ep_{1}+\E(\ep_{3}-\ep_{1})}
	\\&=
	(135\P^{2}-240\P\E+64\E^{2}+42\Q
-9\P+8\E)\ep_{1}.
\end{align*}
We can prove the last statement by induction on $2n+1$ quite similarly.
\end{proof}
In fact, we obtained some unexpected identities with our four kinds of series mixed. Let us record some here (and we can derive more one after another).
\begin{cor}
\begin{align*}
	4\psi_{0}\ep_{1}T_{2}&=T_{0}(3\ep_{1}\psi_{2}+\psi_{0}\ep_{3}).
	\\4\psi_{0}\ep_{1}F_{2}&=F_{0}(3\ep_{1}\psi_{2}+\psi_{0}\ep_{3}).
\end{align*}
\end{cor}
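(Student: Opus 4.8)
The plan is to reduce each of the two asserted product identities to a relation among quotients that have already been computed in the preceding theorems. Since $T_{0}=(q;q)_{\mg}$, $\psi_{0}=\psi(q)$ and $\ep_{1}$ all have constant term $1$, they are units in the ring $\cc[[q]]$ of formal power series; consequently the first identity is equivalent, after dividing through by $T_{0}\psi_{0}\ep_{1}$, to
\[
4\,\ff{T_{2}}{T_{0}}=3\,\ff{\psi_{2}}{\psi_{0}}+\ff{\ep_{3}}{\ep_{1}}.
\]
First I would substitute the known quotients: $\tf{T_{2}}{T_{0}}=P$ by Theorem \ref{t1}, $\tf{\psi_{2}}{\psi_{0}}=\P$ by Theorem \ref{t3}, and $\tf{\ep_{3}}{\ep_{1}}=9\P-8\E$ by Theorem \ref{t4}. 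This turns the target into the purely algebraic relation $4P=3\P+(9\P-8\E)=12\P-8\E$.

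The second step is to verify this algebraic relation, which is immediate from Lemma \ref{l1}: since $P=3\P-2\E$, we have $4P=12\P-8\E$, matching the right-hand side exactly and establishing the first identity. For the second identity I would observe that $\tf{F_{2}}{F_{0}}=P$ by Theorem \ref{t2}, while $F_{0}=\prod_{n\ge1}(1-q^{n})^{3}$ again has constant term $1$ and so is a unit; thus $F_{0}$ plays precisely the role of $T_{0}$, and the identical computation gives $4\,\tf{F_{2}}{F_{0}}=12\P-8\E=3\,\tf{\psi_{2}}{\psi_{0}}+\tf{\ep_{3}}{\ep_{1}}$. Clearing denominators then yields the claim.

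There is essentially no obstacle here: the only point deserving a word of care is the legitimacy of dividing by $T_{0}$, $F_{0}$, $\psi_{0}$ and $\ep_{1}$, which is justified because each is a power series with constant term $1$ and hence invertible in $\cc[[q]]$. The entire content of the corollary is concentrated in the single substitution $P=3\P-2\E$ from Lemma \ref{l1}, which reconciles the coefficient $4$ on the left with the sum $3+9=12$ of the coefficients of $\P$ and the coefficient $-8$ of $\E$ on the right; for this reason I expect the write-up to be very short, with the bookkeeping of which earlier quotient feeds into which term being the only thing to state carefully.
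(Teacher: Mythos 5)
Your proposal is correct and follows essentially the same route as the paper: both reduce the claim to $4\,T_{2}/T_{0}=3\,\psi_{2}/\psi_{0}+\ep_{3}/\ep_{1}$ and verify it via $T_{2}/T_{0}=F_{2}/F_{0}=P$, $\psi_{2}/\psi_{0}=\P$, $\ep_{3}/\ep_{1}=9\P-8\E$ and the relation $P=3\P-2\E$ from Lemma \ref{l1}. The only cosmetic difference is direction (the paper expands $P$ into the right-hand side, you reduce the right-hand side to $4P$), and your remark on invertibility of $T_{0},F_{0},\psi_{0},\ep_{1}$ in $\cc[[q]]$ is a sound justification the paper leaves implicit.
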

\begin{proof}
\[
\ff{T_{2}}{T_{0}}=
\ff{F_{2}}{F_{0}}=
P=
3\P-2\E=
3\ff{\psi_{2}}{\psi_{0}}-
2\ff{1}{8}\k{9\ff{\psi_{2}}{\psi_{0}}-
\ff{\ep_{3}}{\ep_{1}}}
=
\ff{1}{4}\k{3\ff{\psi_{2}}{\psi_{0}}+\ff{\ep_{3}}{\ep_{1}}}.
\]

\end{proof}

For integers $r, s$ such that $s\ge r\ge0$, $r+s$ is even, Ramanujan considered sums \cite{ra}
\[
\Phi_{r, s}(q)=
\dsum_{m, n=1}^{\mug}m^{r}n^{s}q^{mn}
\,
\k{=\k{q\ff{d}{dq}}^{r}\dsum_{n=1}^{\mug}\s_{s}(n)q^{n}}
.
\]
He recorded, for example, 
\begin{align*}
	288\Phi_{1,2}&=Q-P^{2},
	\\720\Phi_{1,4}&=PQ-R,
	\\1008\Phi_{1,6}&=Q^{2}-PR,
	\\1728\Phi_{2,3}&=3PQ-2R-P^{3},
	\\1728\Phi_{2,5}&=P^{2}Q-2PR+Q^{2},
	\\1728\Phi_{2,7}&=2PQ^{2}-P^{2}R-QR.
\end{align*}
He also claimed that 
such a $\Phi_{r,s}$ is always a polynomial in $P, Q, R$. 
%\hf
Likewise, set 
\[
\widetilde{\Phi}_{r, s}(q)=
\k{q\ff{d}{dq}}^{r}\dsum_{n=1}^{\mug}\st_{s}(n)q^{n}.
%\k{=
%\dsum_{n=1}^{\mug}n^{r}\st_{s}(n)q^{n}}.
\]
Notice that 
\[
{q\ff{d}{dq}}
\widetilde{\Phi}_{r, s}(q)=\widetilde{\Phi}_{r+1, s+1}(q).
\]
\begin{thm}\label{tr}
\begin{align*}
	32\widetilde{\Phi}_{1, 2}&=\P^{2}-\Q.
	\\16\widetilde{\Phi}_{1, 4}&=-\P\Q+\R.
	\\16\widetilde{\Phi}_{1, 6}&=
	3\P\R-\Q^{2}-2\E\R.
	\\64\widetilde{\Phi}_{2, 3}&=
	\P^{3}-3\P\Q+2\R.
	\\64\widetilde{\Phi}_{2, 5}&=
	-5\P^{2}\Q-\Q^{2}+10\P\R-4\E\R.
	\\64\widetilde{\Phi}_{2, 7}&=
21\P^{2}\R+13\Q\R-14\P\Q^{2}-28\P\E\R+8\E^{2}\R.
\end{align*}
Moreover, if $s\ge r\ge 0, r+s$ is even, then 
\[
\widetilde{\Phi}_{r, s}=
\sum_{\substack{i, j, k, l\ge0\\
2i+2j+4k+6l=r+s+1}}
K_{ijkl}\P^{i}\E^{j}\Q^{k}\R^{l}
\]
for some rational numbers $K_{ijkl}$. 
\end{thm}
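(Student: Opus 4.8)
The plan is to treat $D:=q\frac{d}{dq}$ as the basic operator and to exploit that $\cc[\P,\E,\Q,\R]$ is a graded differential ring on which $D$ raises weight by $2$ (assigning weights $2,2,4,6$ to $\P,\E,\Q,\R$). Concretely, Proposition \ref{p1} already gives $D\P=\frac{\P^{2}-\Q}{4}$, $D\E=\frac{\E\P-\Q}{2}$ and $D\Q=\P\Q-\R$ (using $\R=\E\Q$); differentiating $\R=\E\Q$ by the product rule then yields $D\R=\frac{3\P\R-\Q^{2}-2\E\R}{2}$. Thus $D$ carries each weight-$w$ homogeneous part of $\cc[\P,\E,\Q,\R]$ into its weight-$(w+2)$ part, and the ring is closed under $D$.

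First I would record the reduction furnished by the recursion $D\widetilde{\Phi}_{r,s}=\widetilde{\Phi}_{r+1,s+1}$ noted before the statement: iterating it gives $\widetilde{\Phi}_{r,s}=D^{r}\widetilde{\Phi}_{0,\,s-r}$, so the diagonal index $d:=s-r$ is the only datum that must be seeded. Since $s\ge r\ge 0$ and since a monomial of weight $r+s+1$ can exist only when $r+s+1$ is even, $d$ is a positive odd integer. Granting that each base function $\widetilde{\Phi}_{0,d}$ ($d$ odd) lies in $\cc[\P,\E,\Q,\R]$ with leading weight $d+1$, applying $D^{r}$ and invoking closure produces $\widetilde{\Phi}_{r,s}$ as an element of the ring; the weight bookkeeping is automatic, since $D^{r}$ raises weight from $d+1$ to $d+1+2r=r+s+1$, and a single application of $D$ annihilates the additive constant present at $r=0$ (so the purely homogeneous form holds once $r\ge 1$). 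This is exactly the routine induction the author alludes to for Theorems \ref{t3} and \ref{t4}, now carried out in the mixed ring.

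The base cases are the crux. The three anchors are immediate from the definitions: $\P=1+8\widetilde{\Phi}_{0,1}$, $\Q=1-16\widetilde{\Phi}_{0,3}$ and $\R=1+8\widetilde{\Phi}_{0,5}$ give $\widetilde{\Phi}_{0,1}=\frac{\P-1}{8}$, $\widetilde{\Phi}_{0,3}=\frac{1-\Q}{16}$ and $\widetilde{\Phi}_{0,5}=\frac{\R-1}{8}$ (here $\widetilde{\sigma}_{5}(0)=\frac18$ is forced by $\R=\E\Q$ and $\E(0)=\Q(0)=1$). The genuine obstacle is to show $\sum_{n\ge1}\widetilde{\sigma}_{d}(n)q^{n}\in\cc[\P,\E,\Q,\R]$ for every odd $d$, since for $d\ge 7$ this cannot be reached from lower cases by $D$ alone (the operator $D$ fixes the diagonal $d=s-r$). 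I would settle this through the elliptic parametrization of Fact \ref{f0} and Proposition \ref{p1}: each $\sum_{n\ge0}\widetilde{\sigma}_{d}(n)q^{n}$ is a weight-$(d+1)$ quasimodular form for $\Gamma_{0}(2)$, and Hahn's structure result identifies $\cc[\P,\E,\Q]$ with precisely that ring, so such a series is a weighted-homogeneous polynomial in $\P,\E,\Q$ (equivalently in $\P,\E,\Q,\R$) up to its constant term. Concretely one writes the generating series as $z^{d+1}$ times a polynomial in $x$, with the weight-$2$ anomaly $x(1-x)z\,dz/dx$ carried by $\P$, and matches it against the corresponding expressions for the monomials.

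Finally, the coefficients $K_{ijkl}$ are rational rather than integral because the base cases already carry denominators $8$ and $16$, and each application of $D$ introduces the factors $\frac14,\frac12$ appearing in $D\P,D\E,D\Q,D\R$. I expect the main difficulty to lie entirely in the base-case claim for large $d$, i.e.\ in invoking (or reproving elementarily, via the $x,z$ parametrization) the fact that the signed Eisenstein-type series of all odd weights lie in the ring $\cc[\P,\E,\Q]$; once this is in hand, the passage to arbitrary $(r,s)$ is the mechanical $D$-induction together with the closure computation of the first paragraph.
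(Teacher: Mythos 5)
Your proposal is correct, and for the six displayed identities it is essentially the same computation the paper carries out: the three $r=1$ cases are the relations $q\P'=\tf{1}{4}(\P^{2}-\Q)$, $q\Q'=\P\Q-\E\Q$ and $q\R'=\tf{1}{2}(3\P\R-\Q^{2}-2\E\R)$ (the last obtained, exactly as you do, by differentiating $\R=\E\Q$) read through $q\P'=8\widetilde{\Phi}_{1,2}$, $q\Q'=-16\widetilde{\Phi}_{1,4}$, $q\R'=8\widetilde{\Phi}_{1,6}$, and the three $r=2$ cases follow by one more application of $q\frac{d}{dq}$. Where you genuinely differ is in the ``Moreover'' clause, and there your treatment is the more complete one. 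The paper's proof ends with the explicit computations and offers no argument for general $(r,s)$; but, as you correctly observe, the operator $D=q\frac{d}{dq}$ preserves the diagonal $d=s-r$, so no amount of differentiation of the cases $d\in\{1,3,5\}$ (where $\widetilde{\Phi}_{0,d}$ equals $\tf{1}{8}(\P-1)$, $\tf{1}{16}(1-\Q)$, $\tf{1}{8}(\R-1)$ respectively) can reach $\widetilde{\Phi}_{r,s}$ with $s-r\ge 7$. The missing ingredient is precisely your base-case claim that $\sum_{n\ge1}\widetilde{\sigma}_{d}(n)q^{n}$ is, up to an additive constant, an isobaric polynomial of weight $d+1$ in $\E,\Q$ for every odd $d$; this follows from the Eisenstein-series structure on $\Gamma_{0}(2)$ or from the $x,z$-parametrization underlying Proposition \ref{p1}, and once it is in place your $D$-induction and the closure of $\mathbf{C}[\P,\E,\Q,\R]$ under $D$ finish the argument. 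Two minor points you implicitly (and correctly) repair: the parity hypothesis should read $r+s$ odd, as the weight condition $2i+2j+4k+6l=r+s+1$ and all six examples force; and for $r=0$ the asserted homogeneous form holds only modulo an additive constant, which disappears after the first differentiation.
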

\begin{proof}
The first three equalities are 
equivalent to the last three ones in Proposition \ref{p1}. 
Applying $q\tf{d}{dq}$ to those again, 
we have:
\begin{align*}
	\widetilde{\Phi}_{2, 3}&=
	q\ff{d}{dq}
\k{\widetilde{\Phi}_{1, 2}}
	\\&=
\ff{1}{32}
\k{2\P \ff{\P^{2}-\Q}{4}-(\P\Q-\R)}.
	\\\widetilde{\Phi}_{2, 5}&=q\ff{d}{dq}
\k{\widetilde{\Phi}_{1, 4}}
	\\&=
-\ff{1}{16}
\k{\ff{\P^{2}-\Q}{4}\Q+\P(\P\Q-\R)
-\ff{1}{8}(12\P\R-4\Q^{2}-8\E\R)
}.
	\\\widetilde{\Phi}_{2, 7}&=q\ff{d}{dq}
\k{\widetilde{\Phi}_{1, 6}}
	\\&=\ff{1}{16}
\left(3\ff{\P^{2}-\Q}{4}\R+
+3\P\ff{1}{8}(12\P\R-4\Q^{2}-8\E\R)
\right.
\\&\left.
-2\Q(\P\Q-\R)-2\ff{\P\E-\Q}{2}\R
-2\E\ff{1}{8}(12\P\R-4\Q^{2}-8\E\R)\right).
\end{align*}
With little more manipulation, we will arrive at the desired results.
\end{proof}

\begin{cor}\label{t5}
\begin{align*}
	\widetilde{\Phi}_{0, 1}(-e^{-\p})&=\ff{1}{4\p}-\ff{1}{8},
	\\\widetilde{\Phi}_{0, 5}(-e^{-\p})&=-\ff{1}{8},
\\
\widetilde{\Phi}_{1, 6}(-e^{-\p})&=
-\ff{\p^{4}}{16\,\Gamma^{16}\k{\tf{3}{4}}},
\\\widetilde{\Phi}_{2, 7}(-e^{-\p})&
=-\ff{7\p^{3}}{16\,\Gamma^{16}\k{\tf{3}{4}}}.
%\sum_{n=1}^{\mg}n \,\st_{5}(n)(-e^{-\p})^{n}
%	\\&=\f{1}{8}q\R'(q)\Bigr|_{q=-e^{-\p}}
%	\\&=\ff{-e^{-\p}}{8}
%	\k{\E'(-e^{-\p})\Q(-e^{-\p})+\underbrace{\E(-e^{-\p})}_{0}
%\Q'(-e^{-\p})}\\&
\end{align*}
\end{cor}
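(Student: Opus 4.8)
The plan is to turn every $\widetilde{\Phi}_{r,s}$ into a polynomial in $\P,\E,\Q,\R$ and then evaluate those four series at $q=-e^{-\pi}$. For $r=0$ the definitions of $\P$ and $\R$ give at once $\widetilde{\Phi}_{0,1}=\frac{\P-1}{8}$ and $\widetilde{\Phi}_{0,5}=\frac{\R-1}{8}$ (the constant term of $\R=\E\Q$ is $1$), while for $r=1,2$ I simply quote $16\widetilde{\Phi}_{1,6}=3\P\R-\Q^{2}-2\E\R$ and $64\widetilde{\Phi}_{2,7}=21\P^{2}\R+13\Q\R-14\P\Q^{2}-28\P\E\R+8\E^{2}\R$ already established in Theorem \ref{tr}. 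Thus the whole statement reduces to the four numbers $\P,\E,\Q,\R$ at $q=-e^{-\pi}$.

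The crux is to place $q=-e^{-\pi}$ in the $(x,z)$ parametrization. Writing $q=e^{2\pi i\tau}$ gives $\tau=\frac{1+i}{2}$, so the elliptic modulus is $x=\lambda(2\tau)=\lambda(1+i)=\frac{\lambda(i)}{\lambda(i)-1}=-1$, using $\lambda(i)=\frac12$ and $\lambda(\tau+1)=\frac{\lambda}{\lambda-1}$. The matching $z={}_{2}F_{1}(\tfrac12,\tfrac12;1;-1)$ follows from Pfaff's transformation, $z=\frac{1}{\sqrt2}\,{}_{2}F_{1}(\tfrac12,\tfrac12;1;\tfrac12)=\frac{1}{\sqrt2}\cdot\frac{\sqrt{\pi}}{\Gamma(3/4)^{2}}$, so $z^{2}=\frac{\pi}{2\,\Gamma(3/4)^{4}}$. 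Substituting $x=-1$ into Proposition \ref{p1} collapses two of the series: $\E=z^{2}(1+x)=0$, hence $\R=\E\Q=0$, whereas $\Q=z^{4}(1-x)^{2}=4z^{4}=\frac{\pi^{2}}{\Gamma(3/4)^{8}}$.

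Only $\P$ remains, and it is the one genuinely non-algebraic input. Since $\E=0$ here, Lemma \ref{l1} collapses to $P=3\P$, so $\P=\frac{P}{3}$ and I need just the classical value $P(-e^{-\pi})=E_{2}(\tfrac{1+i}{2})$. As $\tau=\frac{1+i}{2}$ is $\mathrm{SL}_2(\mathbb{Z})$-equivalent to $i$ via $\gamma=\begin{pmatrix}1&-1\\1&0\end{pmatrix}$ (so $\gamma\tau=i$), I apply the quasimodular transformation law of $E_{2}$ together with $E_{2}(i)=\frac{3}{\pi}$; the anomalous term produces $P(-e^{-\pi})=\frac{6}{\pi}$, whence $\P=\frac{2}{\pi}$. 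As a cross-check, the doubling relations $\E(q)=2P(q^{2})-P(q)$, $\P(q)=\frac{4P(q^{2})-P(q)}{3}$, $\Q(q)=\frac{16Q(q^{2})-Q(q)}{15}$, evaluated with $P(e^{-2\pi})=\frac{3}{\pi}$, $Q(e^{-2\pi})=E_{4}(i)=\frac{3\pi^{2}}{4\Gamma(3/4)^{8}}$ and the $\mathrm{SL}_2(\mathbb{Z})$-images $P(-e^{-\pi})=\frac{6}{\pi}$, $Q(-e^{-\pi})=-4E_{4}(i)$, give the same four values.

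Finally I substitute. The vanishing of $\E$ and $\R$ annihilates every term of the two higher formulas except one, leaving $16\widetilde{\Phi}_{1,6}=-\Q^{2}$ and $64\widetilde{\Phi}_{2,7}=-14\P\Q^{2}$; combined with $\widetilde{\Phi}_{0,1}=\frac{\P-1}{8}$ and $\widetilde{\Phi}_{0,5}=\frac{\R-1}{8}$, plugging in $\P=\frac{2}{\pi}$, $\Q=\frac{\pi^{2}}{\Gamma(3/4)^{8}}$, $\E=\R=0$ yields the four asserted values after a one-line computation. I expect the main obstacle to be precisely the evaluation of $\P$: unlike $\E,\Q,\R$ it is not pinned down by the algebraic parametrization, and one must use the non-modularity of $E_{2}$ — getting the automorphy factor and the anomalous term correct is where the care lies. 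The determination $x=-1$ and the Pfaff evaluation of $z$ are the secondary delicate points, since they continue the parametrization past $|x|<1$.
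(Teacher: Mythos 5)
Your argument is correct and lands on all four asserted values, but it reaches the crux of the corollary --- the special values $\P(-e^{-\p})$, $\E(-e^{-\p})$, $\Q(-e^{-\p})$, $\R(-e^{-\p})$ --- by a genuinely different route. The paper simply quotes Berndt's tabulated evaluations $P(-e^{-\p})=2P(e^{-2\p})=6/\p$, $Q(-e^{-\p})=-4Q(e^{-2\p})$, $R(-e^{-\p})=0$ together with $\E(-e^{-\p})=0$, and then converts to $\P,\Q,\R$ via Lemma \ref{l1}; you instead obtain $\E=\R=0$ and $\Q=\p^{2}/\Gamma^{8}(\tf{3}{4})$ from the hypergeometric parametrization continued to $x=-1$ (via $\lambda(1+i)=-1$ and Pfaff's transformation), and $\P=2/\p$ from the quasimodular transformation of $E_{2}$ at $\tau=\tf{1+i}{2}$, which I have checked gives $E_{2}(\tf{1+i}{2})=\tf{i}{2}^{-1}\bigl(\tf{3}{\p}-\tf{3-3i}{\p}\bigr)=\tf{6}{\p}$ as you claim. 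The reduction $\widetilde{\Phi}_{0,1}=(\P-1)/8$, $\widetilde{\Phi}_{0,5}=(\R-1)/8$ and the final substitution into Theorem \ref{tr} coincide with the paper's. Your route is more self-contained (it derives rather than cites the classical values) at the cost of importing the theory of $\lambda$ and the non-modularity of $E_{2}$; the paper's is shorter but leans entirely on Berndt. Two remarks. First, you correctly use $\Q=z^{4}(1-x)^{2}$: Proposition \ref{p1} as printed reads $z^{4}(1-x)$, which is a typo (the paper itself writes $(1-x)^{2}$ a few lines later, and only the squared form is consistent with $Q=4\E^{2}-3\Q$). Second, the one delicate step you rightly flag --- continuing the $(x,z)$ parametrization to $x=-1$ --- can be made airtight without any continuation argument by invoking the standard $q\mapsto-q$ transformation of Ramanujan's theory, $x\mapsto\tf{x}{x-1}$, $z\mapsto\rt{1-x}\,z$, applied at $q=e^{-\p}$ where $x=\tf{1}{2}$ and $z={}_{2}F_{1}(\tf{1}{2},\tf{1}{2};1;\tf{1}{2})=\rt{\p}/\Gamma^{2}(\tf{3}{4})$; this reproduces exactly your $x=-1$ and $z=\tf{1}{\rt{2}}\cdot\rt{\p}/\Gamma^{2}(\tf{3}{4})$ and hence your values of $\E$, $\Q$, $\R$.
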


\begin{proof}
First, we need to mention that 
the all of following hold.
\[
\P(-e^{-\p})=\ff{2}{\p}, \q
\Q(-e^{-\p})=\ff{\p^{2}}{\Gamma^{8}(\tf{3}{4})}, \q
\E(-e^{-\p})=\R(-e^{-\p})=0.
\]
In fact, Berndt \cite[p.293]{be} shows $\E(-e^{-\p})=0$ ($B(q)$ in \cite[Chapter 11]{be} is identical to $\E(q)$).
For all the others, recall the classical evaluation 
\[
P(-e^{-\p})=2P(e^{-2\p})=\ff{6}{\p},\q
Q(-e^{-\p})=-4Q(e^{-2\p})=\ff{3\p^{2}}{4\Gamma^{8}(\tf{3}{4})},
\]
$R(-e^{-\p})=0$ \cite[p.398, p.306]{be}
and Lemma \ref{l1}. 
With these values, it is now easy to see 
\begin{align*}
	\widetilde{\Phi}_{0, 1}(-e^{-\p})&=\ff{1}{8}(\P(-e^{-\p})-1)=\ff{1}{4\p}-\ff{1}{8},
	\\\widetilde{\Phi}_{0, 5}(-e^{-\p})&=
\ff{1}{8}(\R(-e^{-\p})-1)=-\ff{1}{8}.
\end{align*}
For the other two equalities, use Theorem \ref{tr} and 
$\R(-e^{-\p})=0$.
\end{proof}
\begin{cor}
\[
\widetilde{\Phi}_{0, 9}(-e^{-\p})=
-\ff{31}{8}, \q 
\widetilde{\Phi}_{0, 13}(-e^{-\p})=
-\ff{5461}{8}. 
\]
\end{cor}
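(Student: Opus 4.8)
The plan is to step away from the Hahn series $\P,\E,\Q,\R$ that powered Corollary~\ref{t5} and argue instead with the classical $Q=E_4$, $R=E_6$, because the weights at issue, $10$ and $14$, are $\equiv 2\pmod 4$ and their Eisenstein series are divisible by $R$.

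First I would turn each $\widetilde\Phi_{0,s}$ into a bare $q$-series identity. Combining $\st_s(n)=\s_s(n)-2^{s+1}\s_s\k{\tf n2}$ from the Remark with the definition of $E_{2k}$, and using $\dsum_{n\ge1}\s_s(n/2)q^n=\dsum_{m\ge1}\s_s(m)q^{2m}$, one gets for odd $s$
\[
\widetilde\Phi_{0,s}(q)=\dsum_{n=1}^{\mug}\st_s(n)q^n=\ff{1-E_{s+1}(q)}{c_s}-2^{s+1}\,\ff{1-E_{s+1}(q^2)}{c_s},\qquad c_s=\ff{2(s+1)}{B_{s+1}},
\]
which is purely formal and needs no modularity.

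Next I would feed in the one-dimensionality of the weight-$10$ and weight-$14$ spaces, i.e.\ $E_{10}=QR$ and $E_{14}=Q^2R$, so that the Eisenstein series in play each carry a factor $R$. Evaluating at $q_0=-e^{-\p}$, the paper already supplies $R(-e^{-\p})=0$, while at $q_0^2=e^{-2\p}$ (the point $\tau=i$) one has the classical $R(e^{-2\p})=0$; hence $E_{10}$ and $E_{14}$ vanish at both $q_0$ and $q_0^2$ and the identity collapses to
\[
\widetilde\Phi_{0,s}(-e^{-\p})=\ff{1-2^{s+1}}{c_s}.
\]
Taking $B_{10}=\tf{5}{66}$ gives $c_9=264$ and $\tf{1-2^{10}}{264}=-\tf{31}{8}$, while $B_{14}=\tf76$ gives $c_{13}=24$ and $\tf{1-2^{14}}{24}=-\tf{5461}{8}$.

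The main obstacle is conceptual: the route of Corollary~\ref{t5} does not carry over. If one insists on completing $\widetilde\Phi_{0,9}$ (resp.\ $\widetilde\Phi_{0,13}$) to a single form and writing it as $\sum c_{ab}\E^a\Q^b$, then every monomial has $a\ge 1$ (since $2a+4b\in\{10,14\}$ forbids $a=0$), so the whole form vanishes at $q_0$, where $\E=0$, and the constant carrying the answer is lost. Splitting into the arguments $q$ and $q^2$ is precisely what rescues that constant, and the decisive input is the simultaneous vanishing of $R$ at $q_0$ and at $q_0^2$. This is exactly why weights $\equiv 2\pmod4$ (where $R\mid E_{s+1}$) succeed whereas $s=1$ fails: there $E_2$ is only quasimodular and is nonzero at both points, leaving the genuinely transcendental value seen in Corollary~\ref{t5}.
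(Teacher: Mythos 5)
Your proposal is correct and follows essentially the same route as the paper: both split $\widetilde{\Phi}_{0,s}$ via $\st_{s}(n)=\s_{s}(n)-2^{s+1}\s_{s}\k{\tf{n}{2}}$ into $\Phi_{0,s}(q)-2^{s+1}\Phi_{0,s}(q^{2})$, express $\Phi_{0,s}$ through $E_{10}=QR$ and $E_{14}=Q^{2}R$, and use the vanishing of $R$ at both $-e^{-\p}$ and $e^{-2\p}$ to reduce everything to the constant terms. Your closing remarks on why the $\P,\E,\Q,\R$ route of Corollary \ref{t5} cannot work here are sound commentary but not part of a distinct argument.
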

\begin{proof}
\begin{align*}
	\widetilde{\Phi}_{0, 9}(q)&=
	\dsum_{n=1}^{\mug}\st_{9}(n)q^{n}
	\\&=
	\dsum_{n=1}^{\mug}
	\k{\s_{9}(n)-2^{10}\s_{9}\k{\ff{n}{2}}}
q^{n}
	\\&={\Phi}_{0, 9}(q)-1024{\Phi}_{0, 9}(q^{2}).
\end{align*}
Note that 
$E_{10}(-e^{-\p})=
E_{10}(e^{-2\p})=0$ 
since 
$E_{10}(q)=Q(q)R(q)$ and $R(-e^{-\p})=R(e^{-2\p})=0$.
Therefore, 
\[
\widetilde{\Phi}_{0, 9}(-e^{-\p})=
\ff{1}{-264}(E_{10}(-e^{-\p})-1)
-1024\ff{1}{-264}(E_{10}(e^{-2\p})-1)
=-\ff{31}{8}.
\]
Using the fact $E_{14}(q)=Q^{2}(q)R(q)$, 
we similarly have 
\[
\widetilde{\Phi}_{0, 13}(-e^{-\p})=
\ff{1}{24}(E_{14}(-e^{-\p})-1)
-16384\ff{1}{24}(E_{14}(e^{-2\p})-1)
=-\ff{5461}{8}.
\]

\end{proof}

%Therefore, it is possible to evaluate the higher derivatives 
%$(q\tff{d}{dq})^{r}
%\P(q)\bigr|_{q=-e^{-\p}}$ etc. by induction on $r$.
%These give certain sums such as 
%\[
%\widetilde{\Phi}_{r, s}(q)=
%\sum_{n=1}^{\mg}n^{r}\,\st_{s}(n)q^{n}.
%\]
%With the help of all the discussions above, we can compute some values such as 
%\item Thus, we can also find 
%$\psi_{2}(-e^{-\p}), \psi_{4}(-e^{-\p}), \psi_{6}(-e^{-\p})$  
%since Yi-Lee-Paek showed $\psi(-e^{-\p})=2^{-3/4}e^{\p/8}\tf{\p^{1/4}}{\Gamma\k{\tf{3}{4}}}$ \cite[p.175]{ylp}. 
%\end{enumerate}

%\np%----------------------------------------------------

%\np%----------------------------------------------------

%\[
%.
%\]

%\begin{rmk}
%This value is indeed
%\[
%$l_{91}', 
%l_{92}', 
%l_{93}', 
%l_{94}',
%l_{95} $

%\np%----------------------------------------------------
\section{Convolution sums of divisors}%:\UTF{2022}%
\label{s3}

%Ramanujan and many other researchers found 
%convolution sums for divisor sum functions. 
The aim of this section is to observe several variants of  convolution sums of divisors with examples.  We start with the simple one.
%For convenience, introduce 

%\begin{lem}
%Moreover, 
%\[
%q\R'=
%\P\Q^{2}-\Q\R+\ff{1}{2}(\E^{2}\P-\R).
%\]
%\end{lem}

%\[
%q\R'=q(\E\Q)'
%=(\P\Q-\R)\Q+\E\ff{\E\P-\Q}{2}.\]
%\end{proof}
\begin{thm}\label{t10}
For each $n\ge0$, we have 
\[
\st_{5}(n)=-48
\dsum_{
\substack{i, j\ge0\\
i+j=n}
}\sh(i)\st_{3}(j)
\]
with 
$\st_{5}(0)=\tf{1}{8}, \sh(0)=\tf{1}{24}$ and 
$\st_{3}(0)=-\tf{1}{16}$.
\end{thm}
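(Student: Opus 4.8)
The plan is to extract the stated convolution identity directly from the product formula $\R(q)=\E(q)\Q(q)$, which was established just after Proposition \ref{p1} via the parametrizations $\E=z^{2}(1+x)$, $\Q=z^{4}(1-x)^{2}$, $\R=z^{6}(1-x)(1-x^{2})$. This is the signed-divisor analogue of how Theorem \ref{t9} arises from $E_{8}=E_{4}^{2}$: the whole argument reduces to a Cauchy-product computation in which I read off the coefficient of $q^{n}$ on both sides and divide out the leading constant.

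First I would recall the three generating functions from their definitions,
\[
\E(q)=\dsum_{i=0}^{\mug}24\sh(i)q^{i},\q
\Q(q)=\dsum_{j=0}^{\mug}(-16\st_{3}(j))q^{j},\q
\R(q)=\dsum_{n=0}^{\mug}8\st_{5}(n)q^{n}.
\]
Multiplying the first two as formal power series, the coefficient of $q^{n}$ in $\E(q)\Q(q)$ is
\[
\dsum_{\substack{i,j\ge0\\i+j=n}}24\sh(i)\cdot(-16\st_{3}(j))
=-384\dsum_{\substack{i,j\ge0\\i+j=n}}\sh(i)\st_{3}(j).
\]
Equating this with the coefficient $8\st_{5}(n)$ of $q^{n}$ in $\R(q)$ and dividing by $8$ yields
\[
\st_{5}(n)=-48\dsum_{\substack{i,j\ge0\\i+j=n}}\sh(i)\st_{3}(j),
\]
which is exactly the claim.

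There is essentially no obstacle here beyond keeping the constant-term conventions consistent; the only point worth checking is that the identity also holds at $n=0$. For this I would verify directly that $8\st_{5}(0)=24\sh(0)\cdot(-16\st_{3}(0))$, i.e. $8\cdot\tf{1}{8}=24\cdot\tf{1}{24}\cdot(-16)\cdot(-\tf{1}{16})=1$, so that the prescribed values $\st_{5}(0)=\tf{1}{8}$, $\sh(0)=\tf{1}{24}$, $\st_{3}(0)=-\tf{1}{16}$ are precisely what make the $n=0$ term of $\R=\E\Q$ match. Since all three series are analytic for $|q|<1$, equality of power-series coefficients is equivalent to the functional identity $\R=\E\Q$, so no convergence issues arise and the proof is complete.
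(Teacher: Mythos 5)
Your proof is correct and follows exactly the paper's argument: both derive the identity by writing $\R(q)=\E(q)\Q(q)$ in terms of the defining $q$-series, taking the Cauchy product, and equating coefficients of $q^{n}$ (with $24\cdot(-16)/8=-48$). Your additional check of the $n=0$ constant-term conventions is a harmless bonus the paper leaves implicit.
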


\begin{proof}
The identity $\R(q)=\E(q)\Q(q)$ is equivalent to 
\[
\dsum_{n=0}^{\mug}8\st_{5}(n)q^{n}=
\k{\dsum_{i=0}^{\mug}24\sh(i)q^{i}}
\k{\dsum_{j=0}^{\mug}(-16\st_{3}(j))q^{j}}
.
\]
Equate coefficients of $q^{n}$ of these series.
\end{proof}

\begin{ex}
Observe $\st_{5}{(4)}=1^{5}-2^{5}-4^{5}=-1055$
 while 
\[
-48\k{
\sh(0)\st_{3}(4)
+
\sh(1)\st_{3}(3)
+
\sh(2)\st_{3}(2)
+
\sh(3)\st_{3}(1)+
\sh(4)\st_{3}(0)
}
\]
\[=
-48\k{\ff{-71}{24}+1\cdot 28
+1(-7)+4\cdot 1+1
\k{-\ff{1}{16}}
}=-1055.\]
\end{ex}

In Lemma \ref{l1}, we showed that $P=3\P-2\E$. This is equivalent to 
\begin{align*}
	-24\s(n)&=3\cdot 8\st(n)-2\cdot 24\sh(n),
	\\\s(n)+\st(n)&=2\sh(n),
\end{align*}
an almost trivial identity. 
However, the other two identities on $Q, R$ give 
arithmetical identities which are not so trivial.
One of them involves even a \eh{three-term} convolution  sum.
\begin{thm}\label{t11}
For each $n\ge0$, we have 
\begin{align*}
	5\s_{3}(n)-\st_{3}(n)&=48
\dsum_{
\substack{i, j\ge0\\
i+j=n}
}\sh(i)\sh(j),
	\\7\s_{5}(n)+\st_{5}(n)&=1536
\dsum_{
\substack{i, j, k\ge0\\
i+j+k=n}
}\sh(i)\sh(j)\sh(k)
\end{align*}
with 
$\s_{3}(0)=\tf{1}{240}, 
\st_{3}(0)=-\tf{1}{16}, 
\sh(0)=\tf{1}{24}, 
\s_{5}(0)=-\tf{1}{504}$ and 
$\st_{5}(0)=\tf{1}{8}.$
\end{thm}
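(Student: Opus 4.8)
The plan is to derive both identities from the last two equalities of Lemma \ref{l1} by comparing $q$-expansions coefficient by coefficient, exactly in the spirit of the proof of Theorem \ref{t10}. First I would record the relevant series with their constant terms absorbed via the stated conventions: since $\s_{3}(0)=\tf{1}{240}$ and $\s_{5}(0)=-\tf{1}{504}$ we may write $Q=\sum_{n\ge0}240\s_{3}(n)q^{n}$ and $R=\sum_{n\ge0}(-504)\s_{5}(n)q^{n}$, while by definition $\E=\sum_{n\ge0}24\sh(n)q^{n}$, $\Q=\sum_{n\ge0}(-16\st_{3}(n))q^{n}$, and $\R=\sum_{n\ge0}8\st_{5}(n)q^{n}$. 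With the chosen values $\sh(0)=\tf{1}{24}$, $\st_{3}(0)=-\tf{1}{16}$, $\st_{5}(0)=\tf{1}{8}$, each of these series has constant term $1$, so Lemma \ref{l1} holds already at the level of constant terms and the coefficient comparison is legitimate for every $n\ge0$.

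For the first formula I would expand $Q=4\E^{2}-3\Q$. The square $\E^{2}$ produces a two-fold convolution with prefactor $24^{2}=576$, so equating coefficients of $q^{n}$ gives
\[
240\s_{3}(n)=4\cdot576\sum_{\substack{i,j\ge0\\i+j=n}}\sh(i)\sh(j)+48\st_{3}(n).
\]
Dividing through by $48$ yields $5\s_{3}(n)-\st_{3}(n)=48\sum_{i+j=n}\sh(i)\sh(j)$, which is the asserted identity.

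For the second formula I would expand $R=-8\E^{3}+9\R$. The cube $\E^{3}$ produces a three-fold convolution with prefactor $24^{3}=13824$, so the coefficient of $q^{n}$ reads
\[
-504\s_{5}(n)=-8\cdot13824\sum_{\substack{i,j,k\ge0\\i+j+k=n}}\sh(i)\sh(j)\sh(k)+72\st_{5}(n).
\]
Dividing by $-72$ gives $7\s_{5}(n)+\st_{5}(n)=1536\sum_{i+j+k=n}\sh(i)\sh(j)\sh(k)$, the desired three-term convolution.

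There is no genuine conceptual obstacle here; the only point requiring care is the bookkeeping of the numerical prefactors $24^{2}$ and $24^{3}$ together with the sign-carrying constants $-16$ and $-504$, and confirming that the normalizations $\sh(0)=\tf{1}{24}$, $\st_{3}(0)=-\tf{1}{16}$, $\st_{5}(0)=\tf{1}{8}$ indeed force every constant term to equal $1$ so that Lemma \ref{l1} applies verbatim. Once these are pinned down, both identities drop out immediately from equating coefficients.
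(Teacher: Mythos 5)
Your proposal is correct and follows exactly the paper's own route: the paper likewise derives the first identity by equating coefficients of $q^{n}$ in $Q=4\E^{2}-3\Q$ (writing $240\s_{3}(n)=4\cdot24^{2}\sum\sh(i)\sh(j)-3(-16)\st_{3}(n)$) and notes that the second follows in the same way from $R=-8\E^{3}+9\R$, omitting the details you have written out. Your arithmetic with the prefactors $24^{2}$, $24^{3}$ and the normalizations of the constant terms checks out.
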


\begin{proof}
The first equality is a consequence of 
$Q=4\E^{2}-3\Q$, that is, 
\[
240\s_{3}(n)=4\cdot 24^{2}
\dsum_{
\substack{i, j\ge0\\
i+j=n}
}\sh(i)\sh(j)-3(-16)\st_{3}(n).
\]
The second one is equivalent to $R=-8\E^{3}+9\R.$ 
The idea is now clear. So we omit a proof.
\end{proof}

If $N\in \nn$, say $N=2^{k}n$ with $k\ge0$, $n$ \text{odd}, 
then by multiplicity of $\s_{s}$ we have 
\[
\s_{s}(N)=\s_{s}(2^{m}n)=
\s_{s}(2^{m})\s_{s}(n)
=(2^{m+1}-1)\s_{s}(n).
\]
Thus, it is crucial to determine values 
$\s_{s}(n)$ for $n$ \eh{odd}. 
Here we show two simple formulas 
with two kinds of proofs.

\renewcommand{\sodd}{\sh}

\begin{cor}\label{cor1}
For $n$ odd, we have 
\begin{align*}
	\s_{3}(n)&=12
\dsum_{
\substack{i, j\ge0\\
i+j=n}
}\sodd(i)\sodd(j)\q (\text{cf. Theorem \ref{t9}}),
	\\\s_{5}(n)&=192
\dsum_{
\substack{i, j, k\ge0\\
i+j+k=n}
}\sodd(i)\sodd(j)\sodd(k)
\end{align*}
with $\sh(0)=\tf{1}{24}$.
\end{cor}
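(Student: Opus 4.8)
The plan is to read both identities off Theorem \ref{t11} by specializing to odd $n$, where the $\widetilde{\sigma}$-terms collapse into ordinary $\sigma$-terms. The one ingredient I need beyond Theorem \ref{t11} is the parity observation recorded in the Remark of Section \ref{s2}: whenever $n$ is odd, every divisor of $n$ is odd, so $(-1)^{d-1}=(-1)^{n/d-1}=1$ for all $d\mid n$ and hence $\st_{s}(n)=\s_{s}(n)=\sh_{s}(n)$. With this in hand the corollary requires no new generating-function manipulation.

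First I would treat the cubic formula. For odd $n$ the left-hand side of the first identity of Theorem \ref{t11} becomes
\[
5\s_{3}(n)-\st_{3}(n)=5\s_{3}(n)-\s_{3}(n)=4\s_{3}(n),
\]
so dividing the relation $4\s_{3}(n)=48\sum_{i+j=n}\sh(i)\sh(j)$ by $4$ yields $\s_{3}(n)=12\sum_{i+j=n}\sh(i)\sh(j)$. The quintic formula is identical in spirit: for odd $n$ the second identity of Theorem \ref{t11} has left-hand side $7\s_{5}(n)+\st_{5}(n)=8\s_{5}(n)$, and dividing $8\s_{5}(n)=1536\sum_{i+j+k=n}\sh(i)\sh(j)\sh(k)$ by $8$ gives the stated $\s_{5}(n)=192\sum_{i+j+k=n}\sh(i)\sh(j)\sh(k)$. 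Throughout, the convolutions run over all $i,j,k\ge 0$, so the boundary contributions use the convention $\sh(0)=\tf{1}{24}$ inherited from Theorem \ref{t11}; no $n=0$ ambiguity arises, since the corollary asserts the formulas only for odd $n\ge 1$.

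For the promised second proof I would bypass Theorem \ref{t11} and extract the odd-indexed coefficients directly from Lemma \ref{l1}. Writing $Q=4\E^{2}-3\Q$ (resp.\ $R=-8\E^{3}+9\R$) at the coefficient of $q^{n}$ and using $\Q=\sum_{n}(-16\st_{3}(n))q^{n}$, $\R=\sum_{n}8\st_{5}(n)q^{n}$, the $\Q$- and $\R$-contributions each carry a $\st$-term that becomes a genuine $\s$-term once $n$ is restricted to be odd; collecting these reproduces the same two identities. This route makes transparent the structural analogy flagged by the cross-reference to Theorem \ref{t9}: just as $E_{8}=E_{4}^{2}$ presents $\s_{7}$ as a convolution square of $\s_{3}$, the relation $Q=4\E^{2}-3\Q$ presents the odd part of $\s_{3}$ as a convolution square of $\sh$.

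I do not expect any real obstacle here: all the analytic content already lives in Theorem \ref{t11} and Lemma \ref{l1}, and the corollary is a one-line specialization once the parity identity $\st_{s}(n)=\s_{s}(n)$ for odd $n$ is invoked. The only points demanding care are the elementary factor arithmetic ($48/4=12$ and $1536/8=192$) and confirming that the $n=0$ constant terms never enter, because the assertion is made only at odd indices.
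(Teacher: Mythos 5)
Your argument is correct and coincides with the paper's own first proof: for odd $n$ every divisor is odd, so $\st_{3}(n)=\s_{3}(n)$ and $\st_{5}(n)=\s_{5}(n)$, and Theorem \ref{t11} collapses to $4\s_{3}(n)=48\sum_{i+j=n}\sh(i)\sh(j)$ and $8\s_{5}(n)=1536\sum_{i+j+k=n}\sh(i)\sh(j)\sh(k)$, exactly as you compute. One small remark: your proposed alternative via Lemma \ref{l1} is really the same computation inlined, whereas the paper's actual second proof is different in kind, resting on the identities $4Q(q^{2})+Q(q)=5\E^{2}(q)$ and $8R(q^{2})-R(q)=7\E^{3}(q)$ obtained from the $x,z$ parametrization (the point being that $Q(q^{2})$ and $R(q^{2})$ contribute nothing to odd coefficients).
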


\begin{proof}[First proof]
Recall that $\st_{3}(n)=\s_{3}(n)$ 
and $\st_{5}(n)=\s_{5}(n)$ for $n$ odd. So use Theorem \ref{t11}.
\end{proof}
As a matter of fact, the main idea of the second proof frequently appears in the literature. However, we include  it here since such a proof suggests generalization to Eisenstein series of higher degree; see Theorem \ref{t12} below.

\begin{proof}[Second proof]
Recall from \cite[p.126]{be} that 
\begin{align*}
Q(q^{2})&=z^{4}(1-x+x^{2}),
\\R(q^{2})&=z^{6}(1+x)(1-\tf{1}{2}x)(1-2x)
\end{align*}
where $x, z, q$ are given as in Introduction. On the one hand, we have 
\[
4Q(q^{2})+
Q(q)=
4z^{4}(1-x+x^{2})+z^{4}(1+14x+x^{2})
=
5z^{4}(1+x)^{2}=
5\E^{2}(q).
\]
Again, equating coefficients of $q^{n}$ of these for $n$ odd, we have 
\begin{align*}
	240\s_{3}(n)&=5
\dsum_{
\substack{i, j\ge0\\
i+j=n}
}24^{2}\sh(i)\sh(j)
\end{align*}
and compute these constants. On the other hand, we have 
\begin{align*}
 8R(q^{2})-R(q)&=8z^{6}(1+x)(1-\tf{1}{2}x)(1-2x)-
z^{6}(1+x)(1-34x+x^{2})
 	\\&=7z^{6}(1+x)^{3}=7\E^{3}(q)
 \end{align*}
so that we conclude
\[
-(-504\s_{5}(n))=
7\dsum_{
\substack{i, j, k\ge0\\
i+j+k=n}
}24^{3}\sodd(i)\sodd(j)\sodd(k)
\]
for $n$ odd.
%the proof is complete.
\end{proof}
\begin{rmk}
Hahn also proved the first formula implicitly \cite[Theorem 4.2]{ha}.
\end{rmk}
%\np%----------------------------------------------------

\begin{ex}
Observe 
$\s_{3}(5)=1^{3}+5^{3}=126$ while 
\begin{align*}
	12\dsum_{
\substack{i, j\ge0\\
i+j=5}
}\sh(i)\sh(j)&={12}(2\sodd(0)\sodd(5)
+2\sodd(1)\sodd(4)+2\sodd(2)\sodd(3)
)
	\\&=24
\k{\ff{1}{24}6+1\cdot 1+1\cdot 4}
=126.
\end{align*}
\end{ex}

%Yet there is another formula including a \eh{three-term} convolution sum of divisor sum functions.

\begin{ex}
$\s_{5}(3)=1^{5}+3^{5}=244.$
\begin{align*}
	192\dsum_{
\substack{i, j, k\ge0\\
i+j+k=3}
}\sodd(i)\sodd(j)\sodd(k)
&=
192(3\sodd(3)\sodd(0)^{2}
+6\sodd(2)\sodd(1)\sodd(0)
+\sodd(1)^{3}
)
	\\&=192
\k{3\cdot 4 \ff{1}{24^{2}}
+6\cdot 1\cdot 1\cdot \ff{1}{24}+1^{3}}
=244.
\end{align*}
\end{ex}

\begin{thm}\label{t12}
For $n$ odd, we have 
\[
\s_{7}(n)=
12
\k{864
\dsum_{
\substack{i, j, k, l\ge0\\
i+j+k+l=n}
}\sodd(i)\sodd(j)\sodd(k)\sodd(l)
-
\dsum_{
\substack{i, j\ge0\\
i+j=n}
}\sodd(i)\st_{5}(j)
}
\]
with 
$\sh(0)=\tf{1}{24}$ and $\st_{5}(0)=\tf{1}{8}$.
\end{thm}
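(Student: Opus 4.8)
The plan is to imitate the second proof of Corollary~\ref{cor1}, now carried out at weight $8$. Since $E_{8}=E_{4}^{2}=Q^{2}$, Fact~\ref{f0} gives $E_{8}(q)=z^{8}(1+14x+x^{2})^{2}$, while the parametrization $Q(q^{2})=z^{4}(1-x+x^{2})$ recalled in that proof yields $E_{8}(q^{2})=z^{8}(1-x+x^{2})^{2}$. First I would look for a combination $E_{8}(q)-\lambda E_{8}(q^{2})$ that simultaneously isolates the odd-index coefficients of $E_{8}(q)$ (because $E_{8}(q^{2})$ feeds only even powers) and factors cleanly through the elliptic parametrization in terms of $\E$ and $\R$.

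Using $\E=z^{2}(1+x)$ and $\R=z^{6}(1-x)(1-x^{2})$ from Proposition~\ref{p1} and the line following it, so that $\E^{4}=z^{8}(1+x)^{4}$ and $\E\R=z^{8}(1-x^{2})^{2}$, the key step is to establish the $q$-series identity
\[
E_{8}(q)-16E_{8}(q^{2})=15\E^{4}(q)-30\E(q)\R(q).
\]
After dividing out $z^{8}$ this reduces to the finite polynomial check
\[
(1+14x+x^{2})^{2}-16(1-x+x^{2})^{2}=15(1+x)^{4}-30(1-x^{2})^{2},
\]
where both sides equal $-15+60x+150x^{2}+60x^{3}-15x^{4}$.

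Next I would extract the coefficient of $q^{n}$ for $n$ odd. On the left, $E_{8}(q^{2})$ contributes only to even powers, so by $E_{8}(q)=1+480\dsum_{n\ge1}\s_{7}(n)q^{n}$ (note $B_{8}=-\tf1{30}$) the coefficient is $480\,\s_{7}(n)$. On the right, $\E(q)=\dsum_{n\ge0}24\sh(n)q^{n}$ and $\R(q)=\dsum_{n\ge0}8\st_{5}(n)q^{n}$ give the coefficient $15\cdot24^{4}\dsum_{i+j+k+l=n}\sh(i)\sh(j)\sh(k)\sh(l)-30\cdot192\dsum_{i+j=n}\sh(i)\st_{5}(j)$. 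Equating the two sides and dividing by $480$, the constants collapse to $15\cdot24^{4}/480=10368=12\cdot864$ and $30\cdot192/480=12$, which is precisely the asserted formula; the stipulated values $\sh(0)=\tf1{24}$ and $\st_{5}(0)=\tf18$ are exactly what force the constant terms of $\E$ and $\R$ to be $1$, so the convolution conventions match.

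The main obstacle is discovering the correct pairing of the left-hand combination $E_{8}(q)-16E_{8}(q^{2})$ with the right-hand combination $15\E^{4}-30\E\R$: once this is guessed, the verification is the one-line polynomial identity above and everything else is routine bookkeeping of constants. I expect no genuine difficulty beyond this search, since the weight-$4$ and weight-$6$ cases in the second proof of Corollary~\ref{cor1} already exhibit the same mechanism.
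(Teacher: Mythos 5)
Your proof is correct and hinges on exactly the same central identity as the paper's, namely $E_{8}(q)-16E_{8}(q^{2})=15\E^{4}(q)-30\E(q)\R(q)$, followed by the same extraction of odd coefficients with the same constants $15\cdot 24^{4}/480=12\cdot 864$ and $30\cdot 192/480=12$. The only real difference is how that identity is verified: you check it directly in the $(x,z)$-parametrization as a finite polynomial identity in $x$ (both sides being $-15+60x+150x^{2}+60x^{3}-15x^{4}$), whereas the paper derives it algebraically by squaring $5\E^{2}(q)=4Q(q^{2})+Q(q)$ and then substituting $Q=4\E^{2}-3\Q$ and $\R=\E\Q$; both routes are sound and of comparable length.
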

\begin{proof}
Using 
$5\E^{2}(q)=4Q(q^{2})+Q(q)$, $Q^{2}(q)=E_{4}^{2}(q)=E_{8}(q)$, $Q(q)=4\E^{2}(q)-3\Q(q)$ and 
$\R(q)=\E(q)\Q(q)$ altogether, we find  
\begin{align*}
	25\E^{4}(q)&=16Q^{2}(q^{2})+8Q(q^{2})Q(q)+Q^{2}(q)
	\\&=16Q^{2}(q^{2})+2Q(q)(
	5\E^{2}(q)-Q(q)
	)+Q^{2}(q)
	\\&=16E_{8}(q^{2})+10\E^{2}(q)Q(q)-E_{8}(q)
\end{align*}
and hence 
\begin{align*}
	E_{8}(q)-16E_{8}(q^{2})&=10\E^{2}(q)(4\E^{2}(q)-3\Q(q))-25\E^{4}(q)
	\\&=15\E^{4}(q)-30\E(q)\R(q).
\end{align*}
Equate coefficients of $q^{n}$, $n$ odd.
\end{proof}

\begin{ex}
$\s_{7}(3)=1^{7}+3^{7}=2188.$
%\[
%\dsum_{
%\substack{i, j, k\ge0\\
%i+j+k=3}
%}\sodd(i)\sodd(j)\s_{3}(k)
%\]
%\[=
%2\sh(3)\sh(0)
%\s_{3}(0)
%+\sh(0)^{2}\s_{3}(3)+
%2\sh(2)\sh(1)\s_{3}(0)
%+2\sh(2)\sh(0)\s_{3}(1)
%+2\sh(1)\sh(0)\s_{3}(2)
%+\sh(1)^{2}\s_{3}(1)
%\]
%\[
%=
%2\cdot 4\ff{1}{24}\ff{1}{240}
%+
%\ff{28}{24^{2}}+\ff{2}{240}+
%\ff{2}{24}+\ff{2\cdot 9}{24}+1=\ff{227}{120}
%\]
Let us compute the two sums above for $n=3$ 
separately.
\begin{align*}
	\dsum_{
\substack{i, j, k, l\ge0\\
i+j+k+l=3}
}\sodd(i)\sodd(j)\sodd(k)\sodd(l)
&=
4\sh(3)\sh(0)^{3}
+12\sh(2)\sh(1)\sh(0)^{2}+
4\sh(1)^{3}\sh(0)
	\\&=
4\cdot 4\ff{1}{24^{3}}+12\ff{1}{24^{2}}
+4\ff{1}{24}=\ff{163}{864}.
\end{align*}
\[
\dsum_{
\substack{i, j\ge0\\
i+j=3}
}\sodd(i)\st_{5}(j)
=
\sodd(0)\st_{5}(3)
+\sodd(1)\st_{5}(2)
+\sodd(2)\st_{5}(1)
+\sodd(3)\st_{5}(0)
\]
\[=
\ff{244}{24}+1(-31)+1^{2}+\ff{4}{8}=-\ff{58}{3}.
\]
Therefore, 
\[
12
\k{864\,\ff{163}{864}-\k{-\ff{58}{3}}
}
=2188.
\]
\end{ex}

%Addition to these convolution formulas, 
%we can find some relations of 
%several divisor sum functions.

%\begin{cor}
%For $n$ odd,
%\[
%\s_{5}(n)-2\s_{3}(n)+\s(n)=
%192
%\dsum_{
%\substack{i, j, k>0\\
%i+j+k=n}
%}\sodd(i)\sodd(j)\sodd(k).
%\]
%\end{cor}
%\begin{proof}
%Let us split the sum 
%$\sum_{
%\substack{i, j, k\ge 0\\
%i+j+k=n}
%}\sodd(i)\sodd(j)\sodd(k)$ into the following three parts:
%\[
%S_{1}(n)=3
%\dsum_{
%\substack{i=n, j=k=0
%}
%}\sodd(i)\sodd(j)\sodd(k)=\ff{1}{192}\sh(n),
%\]
%\[
%S_{2}(n)=3
%\dsum_{
%\substack{i, j>0, k=0\\
%i+j=n}
%}\sodd(i)\sodd(j)\sodd(k)=
%\ff{1}{8}
%\dsum_{
%\substack{i, j>0\\i+j=n}
%}\sodd(i)\sodd(j)
%=
%\ff{1}{8}\k{\ff{1}{12}(\s_{3}(n)-\sh(n))}
%\]
%\[
%S_{3}(n)=
%\dsum_{
%\substack{i, j, k>0\\i+j+k=n}
%}\sodd(i)\sodd(j)\sodd(k).
%\]
%Thus 
%\begin{align*}
%	\s_{5}(n)&=
%	192(S_{1}(n)+S_{2}(n)+S_{3}(n))
%	\\&=192\k{
%	\ff{1}{192}\sh(n)+
%\ff{1}{8}\,{\ff{1}{12}(\s_{3}(n)-\sh(n))}
%	+S_{3}(n)}.
%\end{align*}
%Manipulate this using $\sh(n)=\s(n)$ ($n$ odd).
%\end{proof}

%\np%----------------------------------------------------

\section{Evaluation of theta functions}%:\UTF{2022}%
\label{s4}

In this section, 
we will determine the special values of 
$\ep_{0}(\pm e^{-\p}), 
\ep_{0}(e^{2\p})$ and 
$f(e^{-\p}, e^{-5\p})$. 
Let us write $\ep_{0}(q)=\ep(q)$, $\psi_{0}(q)=\psi(q)$ 
for simplicity.
%$\ep_{0}(q)= \dsum_{n=-\mug}^{\mug}q^{e_{n}}$. 

\begin{prop}\label{p2}
\[
\psi(q)=\ep(q^{3})+q\psi(q^{9}).
\]
\end{prop}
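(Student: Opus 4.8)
The plan is to prove the identity by a $3$-dissection of the triangular-number theta series $\psi(q)=\dsum_{k=0}^{\mug}q^{k(k+1)/2}$, splitting the sum according to the residue of $k$ modulo $3$ and matching each resulting piece with a term on the right-hand side. First I would record the elementary observation that $k(k+1)/2$ is divisible by $3$ exactly when $k\equiv 0$ or $k\equiv 2\pmod 3$, since $3\mid k(k+1)$ fails only for $k\equiv 1\pmod 3$. This already suggests the shape of the answer: the classes $k\equiv 0,2$ should together assemble into $\ep(q^{3})$, whose exponents are all multiples of $3$, while the class $k\equiv 1$ should produce the term $q\psi(q^{9})$.

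For the class $k\equiv 1$ I would write $k=3m+1$ with $m\ge 0$ and compute
\[
\ff{(3m+1)(3m+2)}{2}=\ff{9m^{2}+9m+2}{2}=1+9\,\ff{m(m+1)}{2},
\]
so that $\dsum_{m\ge 0}q^{(3m+1)(3m+2)/2}=q\dsum_{m\ge 0}(q^{9})^{m(m+1)/2}=q\psi(q^{9})$, which matches the second term on the nose.

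The heart of the argument, and the step I expect to be the main obstacle, is to show that the two remaining classes recombine into $\ep(q^{3})=\dsum_{m=-\mug}^{\mug}(q^{3})^{m(3m+1)/2}$. For $k=3m$ with $m\ge 0$ I would use $k(k+1)/2=3\cdot m(3m+1)/2$, which yields exactly the terms of $\ep(q^{3})$ with index $m\ge 0$. For $k=3m+2$ with $m\ge 0$ I would use $k(k+1)/2=3\cdot(m+1)(3m+2)/2$ and then apply the reindexing $m'=-(m+1)$, verifying that $(m+1)(3m+2)/2=m'(3m'+1)/2$ with $m'\le -1$. The delicate point is confirming that as $m$ runs over $m\ge 0$ in these two classes, the new index $m'$ runs over $m'\ge 0$ and $m'\le -1$ respectively, so that together they cover each integer in $\zz$ exactly once with no overlap and no omission; this is precisely what turns the union of the two classes into the full bilateral sum $\ep(q^{3})$.

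Finally I would collect the three pieces to conclude $\psi(q)=\ep(q^{3})+q\psi(q^{9})$. Since $|q|<1$, every series involved is absolutely convergent, so the rearrangements and reindexings are all legitimate and no convergence issue arises.
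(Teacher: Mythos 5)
Your proof is correct and follows essentially the same route as the paper's: a $3$-dissection of $\psi(q)=\sum_{k\ge0}q^{k(k+1)/2}$ by the residue of $k$ modulo $3$, with the class $k\equiv 1$ giving $q\psi(q^{9})$ and the classes $k\equiv 0,2$ recombining (via the reindexing $m\mapsto -(m+1)$ on the latter) into the bilateral sum $\ep(q^{3})$. All the computations check out, so there is nothing to add.
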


\begin{proof}
\begin{align*}
	\psi(q)&=
	\dsum_{k=0}^{\mug}q^{k(k+1)/2}
	\\&=
	\dsum_{j=0}^{\mug}q^{3j(3j+1)/2}
	+\dsum_{j=0}^{\mug}q^{(3j+1)(3j+2)/2}
	+\dsum_{j=0}^{\mug}q^{(3j+2)(3j+3)/2}
	\\&=
	\dsum_{j=0}^{\mug}q^{3j(3j+1)/2}
	+\dsum_{j=0}^{\mug}q^{(3j+2)(3j+3)/2}	
	+\dsum_{j=0}^{\mug}q^{(3j+1)(3j+2)/2}
	\\&=
	\k{
	\dsum_{j=0}^{\mug}q^{3j(3j+1)/2}
	+
	\dsum_{j\le-1}^{}q^{3j(3j+1)/2}
	}
+
q
\dsum_{j=0}^{\mug}q^{9j(j+1)/2}
	\\&=\ep(q^{3})+q\psi(q^{9}).
\end{align*}

\end{proof}
\begin{rmk}
Replacing $q$ by $-q$, we also proved 
\[
\psi(-q)=\ep(-q^{3})-q\psi(-q^{9}).
\]
\end{rmk}
Following Yi-Lee-Paek \cite{ylp}, we introduce the following symbols.

\begin{defn}
For $k, n>0$, let 
\begin{align*}
	l_{k, n}=\ff{\psi(-q)}{k^{1/4}q^{(k-1)/8}\psi(-q^{k})}, 
	\q 
l_{k, n}'=
\ff{\psi(q)}{k^{1/4}q^{(k-1)/8}\psi(q^{k})}
\end{align*}
where $q=e^{-\pi\rt{n/k}}$.
\end{defn}

\begin{lem}[{\cite[Theorem 2.3 (i), 3.3 (vii), 5.7 (iii), 5.6 (iii)]{ylp}}]\label{l2}
\begin{align*}
	l_{9,1}&=1, \q 
	l_{9, 1}'=\ff{1}{2}(1+\rt{2}\sqrt[4]{3}+\rt{3}).
	\\\psi(e^{-3\p})&=
	2^{-1/8}3^{-3/8}e^{3\p/8}
	(1+\rt{2}\sqrt[4]{3}+\rt{3})^{-1/2}
	\ff{\pi^{1/4}}{\Gamma(\tf{3}{4})}.
	\\\psi(-e^{-3\p})&=
	2^{-3/4}3^{-1/2}e^{3\p/8}
	(2\rt{3}-3)^{1/4}
	\ff{\pi^{1/4}}{\Gamma(\tf{3}{4})}.
\end{align*}
\end{lem}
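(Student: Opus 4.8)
The plan is to recognize that the four displayed identities are the class-invariant special values already established in \cite{ylp} (precisely the four results Theorems 2.3(i), 3.3(vii), 5.7(iii), 5.6(iii) cited in the statement), so that the task is really to trace their derivations within the Ramanujan--Weber framework rather than to discover anything new. First I would rewrite the theta function as an eta-quotient, using
\[
\psi(q)=\ff{(q^{2};q^{2})_{\mug}^{2}}{(q;q)_{\mug}},
\]
so that each ratio $l_{9,1}$ and $l_{9,1}'$ becomes a quotient of Dedekind eta functions evaluated at the CM point determined by $q=e^{-\p\rt{1/9}}=e^{-\p/3}$ (whence $q^{9}=e^{-3\p}$). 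Because the choice $n=1$ places the argument at the symmetric point, $l_{9,1}$ is self-reciprocal, and the theta transformation formula pins it to the value $1$ directly.

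For $l_{9,1}'$ I would invoke the modular equation of degree $9$ relating $\psi(q)$ and $\psi(q^{9})$: this produces a single algebraic equation satisfied by $l_{9,1}'$, and I would select the correct root using positivity of $\psi$ on $(0,1)$ together with the explicit surd $\tf12(1+\rt2\sqrt[4]3+\rt3)$. To pass from the $l$-values to the absolute evaluations $\psi(e^{-3\p})$ and $\psi(-e^{-3\p})$, the key observation is that $l_{9,1}'$ only fixes the \emph{ratio} of $\psi(e^{-\p/3})$ to $\psi(e^{-3\p})$, so a second input is needed: the theta transformation $t\mapsto 1/t$ supplies an independent relation between $\psi(e^{-\p/3})$ and $\psi(e^{-3\p})$ carrying the $\Gamma(\tf34)$ and $\pi$ constants, and solving the resulting two-by-two system isolates each value. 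The factor $\pi^{1/4}/\Gamma(\tf34)$ then enters through the classical anchor $\varphi(e^{-\p})=\pi^{1/4}/\Gamma(\tf34)$ propagated through these transformations; the substitution $q\mapsto -q$ finally yields the $\psi(-e^{-3\p})$ formula, with a corresponding surd $(2\rt3-3)^{1/4}$ in place of the earlier factor.

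I expect the main obstacle to be the explicit solution and root selection for the degree-$9$ modular equation, together with the careful bookkeeping of the multiplier constants so that the $\Gamma(\tf34)$ factor and the exact powers of $2$, $3$, and $e^{3\p/8}$ emerge precisely as stated. Since these computations are exactly the contents of the cited theorems in \cite{ylp}, the rigorous and economical route in this paper is to quote those results directly rather than reprove them; I would only verify internal consistency by checking that unwinding $l_{9,1}=1$ and $l_{9,1}'$ against the two $\psi$-evaluations reproduces the stated surds.
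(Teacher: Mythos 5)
Your proposal reaches the same conclusion as the paper: Lemma \ref{l2} is stated with the citation to \cite{ylp} in its header and is quoted there without proof, exactly as you recommend. Your sketch of how the cited evaluations are themselves derived (eta-quotients, the degree-$9$ modular equation, the theta transformation, and the anchor $\varphi(e^{-\p})=\pi^{1/4}/\Gamma(\tf{3}{4})$) is a reasonable account of the source's method, but it is supplementary to what the paper actually does.
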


%$l_{91}', 
%l_{92}', 
%l_{93}', 
%l_{94}',
%l_{95} $
 
%\begin{lem}
%\[
%\psi(e^{-\p})=
%\]
%\end{lem}

\begin{thm}\label{t6}
\[
\ep(e^{-\p})=
2^{-9/8}3^{-3/8}e^{\p/24}
\ff{1+\rt{3}+\rt{2}\cdot 3^{3/4}}{
(1+\rt{3}+\rt{2}\sqrt[4]{3})^{1/2}}
\ff{\pi^{1/4}}{\Gamma(\tf{3}{4})}.
\]
\[
\ep(-e^{-\p})=
	2^{-3/4}3^{-1/2}e^{\p/24}
	(1+\rt{3})(2\rt{3}-3)^{1/4}
\ff{\pi^{1/4}}{\Gamma(\tf{3}{4})}.
\]
\end{thm}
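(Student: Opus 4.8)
The plan is to reduce each value $\ep(\pm e^{-\p})$ to known evaluations of $\psi$ by means of the dissection in Proposition \ref{p2}. Setting $q=e^{-\p/3}$ there, so that $q^3=e^{-\p}$ and $q^9=e^{-3\p}$, the identity $\psi(q)=\ep(q^3)+q\psi(q^9)$ rearranges to
\[
\ep(e^{-\p})=\psi(e^{-\p/3})-e^{-\p/3}\psi(e^{-3\p}),
\]
while the signed version $\psi(-q)=\ep(-q^3)-q\psi(-q^9)$ from the Remark gives
\[
\ep(-e^{-\p})=\psi(-e^{-\p/3})+e^{-\p/3}\psi(-e^{-3\p}).
\]
Thus everything reduces to expressing the two ``inner'' values $\psi(\pm e^{-\p/3})$ through $\psi(\pm e^{-3\p})$, which Lemma \ref{l2} already evaluates in closed form.

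The bridge is exactly the definition of $l_{9,1}$ and $l_{9,1}'$. With $k=9$, $n=1$ we have $q=e^{-\p\rt{1/9}}=e^{-\p/3}$, and since $q^{(k-1)/8}=e^{-\p/3}$ and $9^{1/4}=\rt3$, the definitions read
\[
\psi(e^{-\p/3})=l_{9,1}'\,\rt3\,e^{-\p/3}\psi(e^{-3\p}),\q
\psi(-e^{-\p/3})=l_{9,1}\,\rt3\,e^{-\p/3}\psi(-e^{-3\p}).
\]
Substituting $l_{9,1}=1$ and $l_{9,1}'=\tf12(1+\rt2\sqrt[4]3+\rt3)$ from Lemma \ref{l2} and feeding these into the two rearrangements above, I obtain
\[
\ep(e^{-\p})=e^{-\p/3}\psi(e^{-3\p})\k{\tf{\rt3}{2}(1+\rt2\sqrt[4]3+\rt3)-1},
\]
\[
\ep(-e^{-\p})=e^{-\p/3}\psi(-e^{-3\p})(1+\rt3).
\]

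It then remains to insert the two evaluations of $\psi(\pm e^{-3\p})$ from Lemma \ref{l2} and simplify. The exponentials combine cleanly because $-\tf\p3+\tf{3\p}8=\tf\p{24}$, which produces the common prefactor $e^{\p/24}\tf{\pi^{1/4}}{\Gamma(\tf34)}$ in both formulas. The one place that demands care — and the step I expect to be the only genuine obstacle — is the surd bookkeeping in the $\ep(e^{-\p})$ case: I must verify the identity $\tf{\rt3}{2}(1+\rt2\sqrt[4]3+\rt3)-1=\tf12(1+\rt3+\rt2\cdot3^{3/4})$, which hinges on $\rt3\cdot\rt2\cdot3^{1/4}=\rt2\cdot3^{3/4}$, and then merge the powers of two via $2^{-1/8}\cdot\tf12=2^{-9/8}$ to recover the stated coefficient $2^{-9/8}3^{-3/8}$. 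The signed case is lighter, since $2^{-3/4}3^{-1/2}$ and the factor $(1+\rt3)(2\rt3-3)^{1/4}$ fall out directly with no further radical identities required.
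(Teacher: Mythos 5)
Your proposal is correct and follows exactly the paper's own route: dissect via Proposition \ref{p2} (and its signed remark) at $q=e^{-\p/3}$, convert $\psi(\pm e^{-\p/3})$ to $\psi(\pm e^{-3\p})$ through the definitions of $l_{9,1}$ and $l_{9,1}'$, and substitute the evaluations of Lemma \ref{l2}. The surd and exponent bookkeeping you flag ($\tf{\rt3}{2}(1+\rt2\sqrt[4]3+\rt3)-1=\tf12(1+\rt3+\rt2\cdot3^{3/4})$, $-\tf{\p}{3}+\tf{3\p}{8}=\tf{\p}{24}$, $2^{-1/8}\cdot\tf12=2^{-9/8}$) all check out, so nothing is missing.
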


\begin{proof}
Let $k=9, n=1$ and $q=e^{-\p/3}$. 
Proposition \ref{p2} and Lemma \ref{l2} show that 
\begin{align*}
	\ep(q^{3})&=
	\psi(q)-q\psi(q^{9})
	\\&=
	(l_{9,1}'9^{1/4}-1)q\psi(q^{9})
	\\&=
	\k{
	\ff{1}{2}(1+\rt{2}\sqrt[4]{3}+\rt{3})\rt{3}-1
	}e^{-\p/3}\psi(e^{-3\p})
	\\&=
2^{-9/8}3^{-3/8}e^{\p/24}
\ff{1+\rt{3}+\rt{2}\cdot 3^{3/4}}{
(1+\rt{3}+\rt{2}\sqrt[4]{3})^{1/2}}
\ff{\pi^{1/4}}{\Gamma(\tf{3}{4})}
.
\end{align*}.
\begin{align*}
	\ep(-q^{3})&=
	\psi(-q)+q\psi(-q^{9})
	\\&=
	(l_{9,1}9^{1/4}+1)q\psi(-q^{9})
	\\&=(1+\rt{3})e^{-\p/3}\psi(-e^{-3\p})
	\\&=2^{-3/4}3^{-1/2}e^{\p/24}
	(1+\rt{3})(2\rt{3}-3)^{1/4}
\ff{\pi^{1/4}}{\Gamma(\tf{3}{4})}
.
\end{align*}
\end{proof}

\begin{lem}[{\cite[p.772, Theorem 4.3 (ii)]{yck}, 
\cite[p.175, Theorem 5.6 (vi)]{ylp}}]\label{l3}
\begin{align*}
	l_{9,4}'&=
	1+\rt{3}+\ff{1}{2}(\rt{2}+\rt{6})\sqrt[4]{3}.
	\\\psi(e^{-6\p})&=
	\ff{e^{3\p/4}}{
	3^{3/8}(\rt{3}+1)^{5/6}
	(1+\rt{3}+\rt{2}\cdot 3^{3/4})^{2/3}
	}
	\ff{\p^{1/4}}{\Gamma{(\tf{3}{4})}}.
\end{align*}

\end{lem}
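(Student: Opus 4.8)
The plan is to read Lemma \ref{l3} as two singular values belonging to the Yi--Lee--Paek / Yi--Cho--Kim--Lee--Yu--Paek theory of theta-quotients, so in practice both are quoted from \cite{ylp} and \cite{yck}; I would, however, organize the verification around the defining relation. With $k=9$ and $n=4$ the radius is $q=e^{-\p\rt{4/9}}=e^{-2\p/3}$, whence $q^{k}=e^{-6\p}$, $q^{(k-1)/8}=q=e^{-2\p/3}$ and $k^{1/4}=\rt{3}$, so that by definition
\[
l_{9,4}'=\ff{\psi(e^{-2\p/3})}{\rt{3}\,e^{-2\p/3}\,\psi(e^{-6\p})}.
\]
This single equation ties the two assertions together: once $l_{9,4}'$ and $\psi(e^{-2\p/3})$ are in hand, $\psi(e^{-6\p})$ is forced.

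First I would compute $l_{9,4}'$. The quantities $l_{9,n}'$ satisfy modular equations as $n$ ranges, and since the radius $e^{-2\p/3}$ for $n=4$ is the square of the radius $e^{-\p/3}$ for $n=1$, the pertinent relation is a degree-two (duplication) modular equation linking $l_{9,1}'$ and $l_{9,4}'$. Substituting the recorded value $l_{9,1}'=\tf{1}{2}(1+\rt{2}\sqrt[4]{3}+\rt{3})$ from Lemma \ref{l2} and solving the resulting polynomial, keeping the unique positive real root, should return $l_{9,4}'=1+\rt{3}+\tf{1}{2}(\rt{2}+\rt{6})\sqrt[4]{3}$.

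Next I would pin down $\psi(e^{-6\p})$. Rearranging the displayed identity gives $\psi(e^{-6\p})=\psi(e^{-2\p/3})/\big(\rt{3}\,e^{-2\p/3}\,l_{9,4}'\big)$, so it remains to evaluate $\psi(e^{-2\p/3})$ in closed form. One route is to note that $\psi(e^{-\p/3})$ is already fixed by Lemma \ref{l2} (through $l_{9,1}'$ and $\psi(e^{-3\p})$), and then to pass from radius $e^{-\p/3}$ to its square via the classical identity $\psi(q)^{2}=\varphi(q)\psi(q^{2})$; alternatively one evaluates $\psi(e^{-2\p/3})$ directly as a singular value. Either way the transcendental normalization --- the factor $\pi^{1/4}/\Gamma(\tf{3}{4})$ together with the exponential $e^{3\p/4}$ --- is supplied by a Jacobi theta transformation anchored to the classical value $\varphi(e^{-\p})=\pi^{1/4}/\Gamma(\tf{3}{4})$. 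Collecting the algebraic factors into the shape $3^{-3/8}(\rt{3}+1)^{-5/6}(1+\rt{3}+\rt{2}\cdot 3^{3/4})^{-2/3}$ then yields the stated expression.

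The hard part is not the algebra but the \emph{normalization} and branch-tracking: the modular-equation method delivers only algebraic relations among theta-quotients, so at each stage one must select the correct positive real root and, crucially, fix the single transcendental constant through a transformation formula tethered to a known $\Gamma$-evaluation. Because exactly this bookkeeping is carried out in \cite{yck} and \cite{ylp}, the efficient course is to invoke those theorems directly and then confirm numerically that the two displayed closed forms match to high precision.
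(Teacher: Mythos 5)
The paper offers no proof of this lemma at all: both displayed values are imported verbatim from \cite[Theorem 4.3 (ii)]{yck} and \cite[Theorem 5.6 (vi)]{ylp}, so your concluding decision to invoke those theorems directly is exactly the paper's approach. Your unpacking of the definition, giving $l_{9,4}'=\psi(e^{-2\p/3})/\bigl(\rt{3}\,e^{-2\p/3}\,\psi(e^{-6\p})\bigr)$, is correct, and your sketched route (duplication modular equation, $\psi^{2}(q)=\varphi(q)\psi(q^{2})$, normalization via $\varphi(e^{-\p})=\pi^{1/4}/\Gamma(\tf{3}{4})$) is a reasonable outline of how the cited references establish these values.
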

%\begin{rmk}
%%We can evaluate 
%%$\ep(\pm e^{-3\p})$, 
%with values of 
%$l_{94}, l_{94}', l_{99}, l_{99}', \psi(\pm e^{-6\p}), 
%\psi(\pm e^{-9\p})$. 
%\end{rmk}

\begin{thm}\label{t7}
\[
\ep(e^{-2\p})=
\ff{e^{\p/12}
(2+\rt{3}+\tf{1}{2}(\rt{2}+\rt{6})3^{3/4})
}{
	3^{3/8}(\rt{3}+1)^{5/6}
	(1+\rt{3}+\rt{2}\cdot 3^{3/4})^{2/3}
	}
\ff{\p^{1/4}}{\Gamma{(\tf{3}{4})}}.
\]
\end{thm}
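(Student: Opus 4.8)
The plan is to follow the proof of Theorem \ref{t6} verbatim, only with the parameters $k=9$, $n=4$ in place of $k=9$, $n=1$. First I would invoke Proposition \ref{p2} in the rearranged form $\ep(q^{3})=\psi(q)-q\psi(q^{9})$ and specialize to $q=e^{-2\p/3}$, so that $q^{3}=e^{-2\p}$ and $q^{9}=e^{-6\p}$. This reduces the evaluation of $\ep(e^{-2\p})$ to knowledge of the two values $\psi(e^{-2\p/3})$ and $\psi(e^{-6\p})$, both of which are controlled by Lemma \ref{l3}.

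Next I would feed in the definition of $l_{9,4}'$. With $k=9$, $n=4$ the prescribed argument is $q=e^{-\p\rt{4/9}}=e^{-2\p/3}$, and since $(k-1)/8=1$ and $9^{1/4}=\rt{3}$ we get
\[
l_{9,4}'=\ff{\psi(q)}{\rt{3}\,q\,\psi(q^{9})},\q\te{equivalently}\q \psi(q)=l_{9,4}'\,\rt{3}\,q\,\psi(q^{9}).
\]
Substituting this into $\ep(q^{3})=\psi(q)-q\psi(q^{9})$ collapses the two $\psi(q^{9})$ terms into a single one:
\[
\ep(e^{-2\p})=(l_{9,4}'\rt{3}-1)\,q\,\psi(q^{9})=(l_{9,4}'\rt{3}-1)\,e^{-2\p/3}\,\psi(e^{-6\p}).
\]

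It then remains to insert the two closed forms from Lemma \ref{l3}. Using $l_{9,4}'=1+\rt{3}+\tf{1}{2}(\rt{2}+\rt{6})\sqrt[4]{3}$ together with $\rt{3}\cdot\sqrt[4]{3}=3^{3/4}$, the scalar simplifies to $l_{9,4}'\rt{3}-1=2+\rt{3}+\tf{1}{2}(\rt{2}+\rt{6})3^{3/4}$, which is precisely the numerator of the claimed expression. Finally, the exponential factor $e^{3\p/4}$ carried by $\psi(e^{-6\p})$ combines with $e^{-2\p/3}$ to give $e^{-2\p/3+3\p/4}=e^{\p/12}$, while the radical factors $3^{3/8}(\rt{3}+1)^{5/6}(1+\rt{3}+\rt{2}\cdot 3^{3/4})^{2/3}$ and the $\p^{1/4}/\Gamma(\tf{3}{4})$ tail reproduce the denominator. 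The only genuine computation, and hence the main (though entirely routine) obstacle, is the algebraic reduction of $l_{9,4}'\rt{3}-1$ to the stated surd; once that is verified everything else is direct substitution and exponent bookkeeping.
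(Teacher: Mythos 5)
Your proposal is correct and follows exactly the paper's own route: Proposition \ref{p2} with $q=e^{-2\p/3}$, the definition of $l_{9,4}'$ to collapse $\psi(q)-q\psi(q^{9})$ into a single multiple of $q\psi(q^{9})$, and the closed forms from Lemma \ref{l3}. In fact your version is slightly more careful than the paper's one-line proof, which writes $\ep(q^{3})=(l_{9,4}'-1)q\psi(q^{9})$ and omits the factor $9^{1/4}=\rt{3}$; your coefficient $l_{9,4}'\rt{3}-1=2+\rt{3}+\tf{1}{2}(\rt{2}+\rt{6})3^{3/4}$ is the one that actually matches the stated numerator.
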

\begin{proof}
Let $k=9, n=4$ and $q=e^{-2\p/3}$. 
Then 
$\ep(q^{3})=(l_{9, 4}'-1)q\psi(q^{9})$ 
gives the desired equality.
\end{proof}

%\np%----------------------------------------------------

\begin{lem}\label{lx}
\[
f(q, q^{5})=
\ff{\ep(q)}{\ep(q^{2})}\psi(q^{3}).
\]
\end{lem}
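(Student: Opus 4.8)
The plan is to clear the denominator and instead prove the equivalent product-free identity
\[
\ep(q^{2})\,f(q,q^{5})=\ep(q)\,\psi(q^{3});
\]
this is legitimate because $\ep(q^{2})=1+q^{2}+\cdots$ has constant term $1$ and so is invertible as a formal power series. First I would write each of the four theta series as an infinite product via the Jacobi triple product $f(a,b)=(-a;ab)_{\mg}(-b;ab)_{\mg}(ab;ab)_{\mg}$ (see \cite{be}). Recalling $\ep(q)=f(q,q^{2})$ and $\psi(q)=f(q,q^{3})$, this gives
\[
f(q,q^{5})=(-q;q^{6})_{\mg}(-q^{5};q^{6})_{\mg}(q^{6};q^{6})_{\mg},\qquad
\ep(q^{2})=(-q^{2};q^{6})_{\mg}(-q^{4};q^{6})_{\mg}(q^{6};q^{6})_{\mg},
\]
\[
\ep(q)=(-q;q^{3})_{\mg}(-q^{2};q^{3})_{\mg}(q^{3};q^{3})_{\mg},\qquad
\psi(q^{3})=(-q^{3};q^{12})_{\mg}(-q^{9};q^{12})_{\mg}(q^{12};q^{12})_{\mg}.
\]

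Next I would collect the factors of the form $(-q^{a};q^{b})_{\mg}$ on each side and rewrite them in terms of $(-q;q)_{\mg}$ by grouping residue classes. On the left, $(-q;q^{6})_{\mg}(-q^{2};q^{6})_{\mg}(-q^{4};q^{6})_{\mg}(-q^{5};q^{6})_{\mg}$ is exactly $(-q;q)_{\mg}$ with the residues $0,3\pmod 6$ deleted, hence equals $(-q;q)_{\mg}/(-q^{3};q^{3})_{\mg}$. On the right, $(-q;q^{3})_{\mg}(-q^{2};q^{3})_{\mg}=(-q;q)_{\mg}/(-q^{3};q^{3})_{\mg}$ as well, while $(-q^{3};q^{12})_{\mg}(-q^{9};q^{12})_{\mg}=(-q^{3};q^{6})_{\mg}$. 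After cancelling the common factor $(-q;q)_{\mg}/(-q^{3};q^{3})_{\mg}$, the claim collapses to the even-power statement
\[
(q^{6};q^{6})_{\mg}^{2}=(q^{3};q^{3})_{\mg}\,(-q^{3};q^{6})_{\mg}\,(q^{12};q^{12})_{\mg}.
\]
Writing $Q=q^{3}$ this is $(Q^{2};Q^{2})_{\mg}^{2}=(Q;Q)_{\mg}(-Q;Q^{2})_{\mg}(Q^{4};Q^{4})_{\mg}$, which follows immediately from the elementary relation $(-Q;Q^{2})_{\mg}=(Q^{2};Q^{2})_{\mg}^{2}/\bigl((Q;Q)_{\mg}(Q^{4};Q^{4})_{\mg}\bigr)$, obtained by splitting $(-Q;Q)_{\mg}=(Q^{2};Q^{2})_{\mg}/(Q;Q)_{\mg}$ into its odd and even parts.

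I expect the only genuine work to lie in the middle step: one must track the residue classes modulo $6$ and modulo $12$ with care so that every $(-q^{a};q^{b})_{\mg}$ factor is accounted for and none is double counted, since an error there silently shifts the final reduction. Everything after that is formal manipulation of $q$-Pochhammer symbols. As a safeguard I would also verify the identity to a few powers of $q$ at the outset. An alternative, should the product route feel opaque, is to match coefficients of $q^{n}$ directly: with the substitutions $K=6k-2$, $J=6j+1$, $M=6m+1$, $N=2n+1$ the equality of exponents becomes the quadratic-form identity $2(K^{2}+J^{2})=M^{2}+9N^{2}$, which follows from $2(K^{2}+J^{2})=(K+J)^{2}+(K-J)^{2}$, the congruences $K\equiv 4,\ J\equiv 1\pmod 6$ forcing a unique lattice point on each side; but I expect the Jacobi-triple-product computation above to be the cleaner path.
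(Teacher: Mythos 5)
Your proof is correct, but it takes a genuinely different route from the paper. The paper disposes of the lemma in one line: it notes $f(q,q^{2})=\ep(q)$ and then quotes the catalogued theta--function identity $f(a,ab^{2})\,f(b,a^{2}b)=f(a,b)\,\psi(ab)$ from Berndt \cite[p.46]{be} with $a=q$, $b=q^{2}$, which gives $f(q,q^{5})\,f(q^{2},q^{4})=f(q,q^{2})\,\psi(q^{3})$, i.e.\ exactly the claim. You instead give a self-contained product proof: expand all four theta functions by the Jacobi triple product, sort the $(-q^{a};q^{b})_{\mg}$ factors by residue class, cancel the common factor $(-q;q)_{\mg}/(-q^{3};q^{3})_{\mg}$, and reduce to $(Q^{2};Q^{2})_{\mg}^{2}=(Q;Q)_{\mg}(-Q;Q^{2})_{\mg}(Q^{4};Q^{4})_{\mg}$ with $Q=q^{3}$, which is the standard $2$-dissection of $(-Q;Q)_{\mg}$. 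I checked the residue-class bookkeeping (the step you rightly flag as the only delicate one) and it is accurate: the four classes $1,2,4,5 \pmod 6$ on the left and the classes $1,2\pmod 3$ together with $3,9\pmod{12}$ on the right match up exactly as you say, and the final Pochhammer identity closes the argument. In effect you have re-derived the special case $a=q$, $b=q^{2}$ of Berndt's identity from scratch; what your approach buys is independence from that catalogued entry (only the triple product and Euler-type manipulations are used), at the cost of more bookkeeping, while the paper's citation is shorter and immediately generalizes to other pairs $(a,b)$. Your closing coefficient-matching sketch is dispensable and less convincing as written, but the main product argument stands on its own.
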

\begin{proof}
Recall that 
\[
f(a, b)=
\dsum_{j=-\mug}^{\mug}a^{j(j+1)/2}b^{(j-1)j/2}.
\]
%Now 
%\[
%f(q, q^{5})=
%\dsum_{j=-\mug}^{\mug}q^{j(3j-2)}
%=
%\dsum_{k=-\mug}^{\mug}q^{k(3k+2)}
%\]
%is the series on the left hand side.
%so that $f(e^{-\p}, e^{-5\p})$ is the series on the left hand side.
%enough to evaluate 
%$f(e^{-\p}, e^{-5\p}).$
Notice that 
\[
f(q, q^{2})=
\dsum_{j=-\mug}^{\mug}q^{j(3j-1)/2}=
\dsum_{k=-\mug}^{\mug}q^{k(3k+1)/2}=
\ep(q).
\]
The identity 
$f(a, ab^{2})f(b, a^{2}b)=f(a, b)\psi(ab)$ 
\cite[p.46]{be} with 
$a=q, b=q^{2}$ yields 
\[
f(q, q^{5})=\ff{f(q, q^{2})}{f(q^{2}, q^{4})}\psi(q^{3})=
\ff{\ep(q)}{\ep(q^{2})}\psi(q^{3}).
\]
\end{proof}

\begin{thm}\label{t8}
$f(e^{-\p}, e^{-5\p})$ is equal to 
\begin{align*}
2^{-5/4}3^{-3/8}e^{\pi/3}
(\rt{3}+1)^{5/6}
\ff{(1+\rt{3}+\rt{2}\cdot 3^{3/4})^{5/3}}{
(1+\rt{3}+\rt{2} \sqrt[4]{3})
(2+\rt{3}+\tf{1}{2}(\rt{2}+\rt{6})3^{3/4})
}
\ff{\p^{1/4}}{\Gamma{(\tf{3}{4})}}.
%	\\&=1.043210689\cd.
\end{align*}
\end{thm}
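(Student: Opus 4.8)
The plan is to read the value off directly from Lemma~\ref{lx} by specializing $q = e^{-\p}$. With this choice $q^{5} = e^{-5\p}$, so the left side is exactly the target $f(e^{-\p}, e^{-5\p})$ while the right side becomes
\[
f(e^{-\p}, e^{-5\p}) = \ff{\ep(e^{-\p})}{\ep(e^{-2\p})}\,\psi(e^{-3\p}).
\]
Every factor on the right has already been computed: $\ep(e^{-\p})$ is the first equality of Theorem~\ref{t6}, $\ep(e^{-2\p})$ is Theorem~\ref{t7}, and the value $\psi(e^{-3\p})$ is recorded in Lemma~\ref{l2}. Thus no new analysis is required; the whole proof is the assembly and simplification of these three closed forms.

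First I would record the three ingredients and note that each carries the common analytic factor $\p^{1/4}/\Gamma(\tf{3}{4})$. In the quotient $\ep(e^{-\p})/\ep(e^{-2\p})$ this factor cancels, so after multiplying by $\psi(e^{-3\p})$ a single copy survives, matching the shape of the claimed answer. The powers of $2$ combine as $2^{-9/8}\cdot 2^{-1/8} = 2^{-5/4}$; the powers of $3$ reduce to $3^{-3/8}$ once the $3^{3/8}$ arising from $1/\ep(e^{-2\p})$ is absorbed; and the exponentials combine as $e^{\p/24}\cdot e^{-\p/12}\cdot e^{3\p/8} = e^{\p/3}$, using the common denominator $24$.

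The bookkeeping of the nested radicals is the only real work. The quantity $1+\rt{3}+\rt{2}\cdot 3^{3/4}$ appears to the first power in $\ep(e^{-\p})$ and to the power $-\tf{2}{3}$ in the denominator of $\ep(e^{-2\p})$, so in the quotient it sits to the power $1+\tf{2}{3}=\tf{5}{3}$, producing the $(1+\rt{3}+\rt{2}\cdot 3^{3/4})^{5/3}$ of the statement. The distinct factor $1+\rt{3}+\rt{2}\sqrt[4]{3}$ occurs to the power $-\tf{1}{2}$ in both $\ep(e^{-\p})$ and $\psi(e^{-3\p})$, and these add to the single power $-1$ in the denominator. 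Finally $(\rt{3}+1)^{5/6}$ migrates from the denominator of $\ep(e^{-2\p})$ into the numerator, while $2+\rt{3}+\tf{1}{2}(\rt{2}+\rt{6})3^{3/4}$ stays in the denominator, and collecting everything gives the stated expression. The main obstacle is purely clerical, namely keeping the fractional exponents on the two superficially similar radicals straight and not conflating $1+\rt{3}+\rt{2}\cdot 3^{3/4}$ with $1+\rt{3}+\rt{2}\sqrt[4]{3}$, rather than any conceptual difficulty.
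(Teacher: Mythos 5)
Your proposal is correct and is exactly the paper's own proof: the paper also specializes Lemma~\ref{lx} at $q=e^{-\p}$ and assembles the values from Lemma~\ref{l2} and Theorems~\ref{t6} and~\ref{t7}. Your exponent bookkeeping (the $2^{-5/4}$, $3^{-3/8}$, $e^{\p/3}$, and the powers $\tf{5}{3}$ and $-1$ on the two distinct radicals) all checks out against the stated formula.
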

\begin{proof}
Apply Lemma \ref{lx} with $q=e^{-\p}$ 
and use the values in Lemma \ref{l2}, Theorems \ref{t6}, \ref{t7}.
\end{proof}

%\begin{rmk}
%In this way, we evaluated a little unfamiliar sum 
%\[
%f(e^{-\p}, e^{-5\p})=
%\sum_{k=-\mug}^{\mug}
%(e^{-\p})^{k(3k+2)}.
%\]
%According to Wolfram alpha, this is approximately $1.043210689\cd.$
%%$\ff{\ep(e^{-\p})}{\ep(e^{-2\p})}\psi(e^{-3\p})$
%\end{rmk}

\section{Acknowledgment.}

The author would like to thank my family for encouraging  his work.

\np%----------------------------------------------------

%:%:\UTF{2022}%
\end{document}